\newtheorem{thm}{Theorem}
\newtheorem{defn}{Definition}
\newtheorem{lemma}{Lemma}
\newtheorem{rem}{Remark}
\newtheorem{cor}{Corollary}
\newcommand{\C}{\mathbb C}
\newcommand{\D}{\mathbb D}
\newcommand{\wto}{\stackrel{\mathrm{w}^*}{\to}}
\newcommand{\Om}{\Omega}
\newcommand{\absv}[1]{\left\lvert{#1}\right\rvert}
\newcommand{\Supp}{\ensuremath{\operatorname{supp}}}
\begin{document}
\title{Value Distributions of Derivatives of $K$-regular Polynomial Families}
\author{%
  Henriksen, Christian \\
  \texttt{chrh@dtu.dk} \\
  Department of Applied Mathematics and Computer Science \\
  Technical University of Denmark
  \and
  Petersen, Carsten Lunde \\
  \texttt{lunde@math.ku.dk} \\
  Department of Mathematical Sciences \\
  University of Copenhagen
  \and
  Uhre, Eva \\
  \texttt{euhre@ruc.dk} \\
  Department of Science and Environment \\
  Roskilde University
}
\date{\today}
\maketitle
\begin{abstract}
  Let $\Omega \in \C$ be a domain such that 
  $K:= \C \setminus \Omega$ is compact and non-polar.
  Let $(q_k)_{k>0}$ be a sequence of polynomials
  with $n_k$, the degree of $q_k$ satisfying $n_k \to \infty$,
  and let $(q_k^{(m)})_k$ denote the sequence of $m$-th derivatives.
  We provide conditions, which ensure that 
  the preimages $(q_k^{(m)})^{-1}(\{a\})$ uniformly equidistribute on $\partial \Omega$,
  as $k \to \infty$, for every $a \in \C$ and every $m = 0, 1, \ldots$
\end{abstract}
\noindent {\em \small 2020 Mathematics Subject Classification: Primary: 42C05, Secondary: 37F10, 31A15}
\noindent {\em \small Keywords: Orthogonal Polynomials, Julia set, Green's function}

\section{Introduction and main results} \label{intro}
In recent papers, Okuyama and Vigny showed that the asymptotic
value distribution of a sequence of $m$-th derivatives of iterates
of a polynomial, is generically the same as that of the
sequence of iterates itself,
see \cite{Okuyama1}, \cite{OkuyamaVigny}.
It raises the question, whether this behavior is special
for sequences of iterates, or if this could be true
in more general families of polynomials.
This question was answered affirmatively by Totik in \cite{Totik}, 
where he provides conditions on a sequence of polynomials $(q_k)_k$, 
which ensure that the sequence of derivatives $(q'_k)_k$ 
has the same asymptotic zero distribution as the sequence of polynomials itself. 
Furthermore, in \cite{HPU1} we have complemented Totik's result 
by providing different conditions, leading to the same conclusion.
 
In the current paper, we continue this line of work by exploring 
the important special case where the asymptotic
measure is the equilibrium measure of a compact set $K$.
This allows for stronger results,
which we will state in this section.

Throughout this paper we employ the following
conventions.
We let $K \subset \C$ be a compact, non-polar, polynomially convex set. 
Then $\Omega := \C \setminus K$ is a domain admitting 
a Green's function $g_\Omega$ with pole at $\infty$. 
Let $J:=\partial K=\partial \Omega$ and let $\omega$ 
denote the equilibrium measure for $K$.

We study sequences $(q_k)_{k>0}$ of polynomials, where
\begin{equation}
q_k(z)=\gamma_k z^{n_k}+O\left(z^{n_k-1}\right), n_k > 0
\end{equation}
with degree $n_k \to \infty$.

Given a non-constant polynomial $q$, we can write it as
\[ q(z) = \gamma \prod_{j=1}^n (z-z_j) \]
where $\gamma$ is the leading coefficient and $(z_j)$ are
the roots, repeated with respect to multiplicity.
We define the \emph{root distribution} of $q$ to be
the (Borel) probability measure $\mu_q$ defined by 
\[
  \mu_q = \frac{1}{n} \sum_{j=1}^n \delta_{z_j},
\]
where $\delta_z$ denotes the Dirac point mass at $z$.
Then for any set $U \subset \C$, the integer
$n\mu(U)$ equals the number of roots of $q$ in $U$
counted with multiplicity.

To alleviate notation, we denote the root distribution of $q_k$
by $\mu_k := \mu_{q_k}$.

For many important families studied in the literature,
the number of roots of $q_k$ in a closed set $L$ not meeting $K$
can be uniformly bounded above (see \cite[Example 6 and 7]{HPU1}
for two important examples).
In view of this we define
\begin{defn}[Centering]
  We say that $(q_k)_{k>0}$ \emph{centers} on
  the compact set $C$ if there exists $N$ such that the following two
  conditions hold:
  \begin{itemize}
    \item[1.]
      There exists $R > 0$ such that for all $k\geq N$
      \begin{equation}\label{centering1}
        \nonumber q_k^{-1}(\{0\})\subset \D(R).
      \end{equation}
    \item[2.] For every closed set $L$ with $L\cap C=\emptyset$ there exists
      $M = M(L)>0$ such that
      \[n_k \mu_k(L) \leq M\; \text{for all }k\geq N.\]
  \end{itemize}
  We will say that $(q_k)_{k>0}$ \emph{weakly centers} on the compact
  set $C$ if condition 2{.}
  is replaced with the weaker condition
  \begin{itemize}
    \item[2.](Weak) For every closed set $L$ with $L\cap C=\emptyset$,
      $\mu_k(L)=o(1)$ as $k\to \infty$.
  \end{itemize}
\end{defn}
Centering on $K$, which by our standing assumption is compact
and polynomially convex, is inherited by the sequence of derivatives,
see \cite[corollary 3]{HPU1}.
We include this result here for completeness.
\begin{thm}\label{thmInheritCenter}
  Let $K \subset \C$ be compact and polynomially convex.
  If a sequence of polynomials $(q_k)_{k>0}$ centers on $K$, then so
  does the sequence of its derivatives $(q'_k)_{k>0}$.
\end{thm}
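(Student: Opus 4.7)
The first condition of centering for $(q'_k)$ is immediate from the Gauss--Lucas theorem: the zeros of $q'_k$ lie in the convex hull of the zeros of $q_k$, hence in $\overline{\D(R)}$. For the second condition, fix a closed set $L$ with $L \cap K = \emptyset$. By condition 1 it suffices to work with the compact set $L_0 := L \cap \overline{\D(R)}$, which is still disjoint from $K$. I would cover $L_0$ by finitely many closed disks $\overline{B_i}$, each contained in a slightly larger open disk $B_i^*$ with $\overline{B_i^*} \cap K = \emptyset$. Applying the centering hypothesis of $(q_k)$ to the closed set $\overline{B_i^*}$ furnishes a constant $M_i$ with $n_k \mu_k(\overline{B_i^*}) \leq M_i$ for all $k$ sufficiently large. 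It therefore suffices to bound the number of critical points of $q_k$ in each $\overline{B_i}$ uniformly in $k$.

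The workhorse identity is $q'_k(z)/q_k(z) = n_k h_k(z)$, where $h_k(z) := \int \frac{d\mu_k(\zeta)}{z-\zeta}$ is the Cauchy transform of the root distribution. Each critical point of $q_k$ in $\overline{B_i}$ is either (i) a multiple root of $q_k$, with total contribution at most $n_k \mu_k(\overline{B_i}) \leq M_i$, or (ii) a zero of $h_k$ where $q_k$ does not vanish. To count the latter, let $\tilde P_k$ be the monic polynomial whose simple roots are the distinct roots of $q_k$ in $\overline{B_i^*}$; then $\deg \tilde P_k \leq M_i$, and $H_k := \tilde P_k \cdot h_k$ is holomorphic on $\overline{B_i^*}$. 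Because the simple poles of $h_k$ at roots of $q_k$ in $\overline{B_i}$ are cancelled by simple zeros of $\tilde P_k$ (with nonzero residual value), the zeros of $H_k$ in $\overline{B_i}$ coincide, with multiplicity, with the zeros of $h_k$ in $\overline{B_i} \setminus q_k^{-1}(0)$.

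The uniform bound on the zero count of $H_k$ follows by a compactness-and-Hurwitz argument. If the counts were unbounded, extract a subsequence $(k_j)$ along which the counts diverge, the probability measures $\mu_{k_j}$ converge weakly to some $\nu$ on $\overline{\D(R)}$, and the monic polynomials $\tilde P_{k_j}$ converge to some monic $\tilde P_\infty$ (both compactness statements use that the object lies in a compact set). Centering forces $\Supp \nu \subseteq K$. Split $\mu_k = \mu_k^{\mathrm{in}} + \mu_k^{\mathrm{out}}$ by restriction to $\overline{B_i^*}$. Since $\mu_k^{\mathrm{in}}(\C) \leq M_i/n_k$ and each $\tilde P_k(z)/(z-w)$ is a polynomial of degree $\leq M_i-1$ with $k$-uniform size on $\overline{B_i^*}$, one checks that $\tilde P_k \int \frac{d\mu_k^{\mathrm{in}}(\zeta)}{z-\zeta} \to 0$ uniformly on $\overline{B_i^*}$; and since the kernel $1/(z-\zeta)$ is equicontinuous for $z \in \overline{B_i}$ and $\zeta$ outside $\overline{B_i^*}$, one has $\int \frac{d\mu_k^{\mathrm{out}}(\zeta)}{z-\zeta} \to F_\nu(z) := \int \frac{d\nu(\zeta)}{z-\zeta}$ uniformly on $\overline{B_i}$. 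Hence $H_{k_j} \to \tilde P_\infty F_\nu$ uniformly on $\overline{B_i}$. Since $zF_\nu(z) \to 1$ as $z \to \infty$, the limit is not identically zero and has only finitely many zeros in $\overline{B_i}$; after perturbing radii to keep the limit nonvanishing on $\partial B_i$, Hurwitz's theorem forces the zero counts of $H_{k_j}$ to stabilize to a finite number, contradicting their assumed unboundedness.

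The principal obstacle is that $h_k$ itself has poles inside $\overline{B_i}$, blocking a direct Hurwitz argument. The introduction of $\tilde P_k$ converts the counting into a purely holomorphic problem, while the smallness of $\mu_k^{\mathrm{in}}$ together with the bound on $\deg \tilde P_k$ is precisely what ensures the limit of $H_k$ is controlled by $\nu$ alone.
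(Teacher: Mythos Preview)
The paper does not actually prove Theorem~\ref{thmInheritCenter}; it merely quotes the statement and refers to \cite[Corollary~3]{HPU1} for the argument. So there is no in-paper proof to compare against.

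Your argument is correct. Gauss--Lucas handles condition~1, and for condition~2 your reduction to finitely many disks $\overline{B_i}\subset B_i^*$ with $\overline{B_i^*}\cap K=\emptyset$ is the right localization. The decomposition of critical points into (i) multiple roots of $q_k$ (at most $M_i$ of them in $\overline{B_i}$) and (ii) zeros of the Cauchy transform $h_k$ away from the root locus is clean, and clearing the poles of $h_k$ by the bounded-degree monic factor $\tilde P_k$ to obtain the holomorphic $H_k$ is exactly what makes Hurwitz applicable. The verification that $H_k\to\tilde P_\infty F_\nu$ locally uniformly on $B_i^*$ goes through as you describe: the ``in'' part dies because $\tilde P_k(z)/(z-\zeta_j)$ is a polynomial of degree at most $M_i-1$ with uniformly controlled coefficients and there are at most $M_i$ such terms, each carrying a factor $1/n_k$; the ``out'' part converges by weak-star convergence against a kernel that is bounded and equicontinuous on $\overline{B_i}\times(\overline{\D(R)}\setminus B_i^*)$.

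One point worth making explicit: to conclude that $F_\nu$ is not identically zero \emph{on $B_i$} from its behavior at infinity, you need $B_i$ and $\infty$ to lie in the same component of $\C\setminus\Supp\nu$. Since centering forces $\Supp\nu\subset K$ and $K$ is polynomially convex, $\C\setminus K$ is connected and contains both; this is precisely where the polynomial convexity hypothesis enters your proof.
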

We recently studied the root distribution of the sequence of
$m$th derivatives $(q^{(m)}_k)_{k>0}$ of a family of polynomials $(q_k)_k$ centering on 
a compact set, see \cite{HPU1}.
Adding control over the asymptotics of $q_k$ near infinity,
as $k \to \infty$ allows us to obtain stronger results. 
The control alluded to, is the following.
\begin{defn}[$K$-regularity]
  Let $K \subset \C$ be compact.
  We say that a sequence of polynomials $(q_k)_{k>0}$ is
  \emph{$K$-regular} if, for some $R > 0$,
  \begin{equation*}
    \lim_{n\to\infty}{\textstyle\frac{1}{n_k}}\log|q_k(z)| = g_\Om(z)
  \end{equation*}
  uniformly on $\C\setminus \D(0, R)$.
\end{defn}
Notice that if the root locus of $q_k$ is uniformly bounded,
then uniform convergence on $\C\setminus \D(0, R)$ is equivalent
to local uniform convergence on $\C\setminus \D(0, R)$.
Also notice, that our definition of $K$-regularity is a generalization 
of $n$th-root regularity as defined in
\cite[Definition 3.1.2]{StahlTotik}.
We have that $K$-regularity is inherited by the sequence of derivatives.
\begin{thm}\label{thmInheritKreg}
 Let $K \subset \C$ be a compact, non-polar and polynomially convex set.
 If $(q_k)_k$ is $K$-regular, then so is $(q_k')_k$.
\end{thm}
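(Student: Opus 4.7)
The plan is to reduce the convergence for $(q_k')_k$ to the hypothesized convergence for $(q_k)_k$ via the logarithmic derivative identity
\[
\frac{q_k'(z)}{q_k(z)} = \sum_{j=1}^{n_k} \frac{1}{z - z_j},
\]
valid outside the root locus of $q_k$.

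First I would observe that $K$-regularity forces all roots of $q_k$ into $\D(0,R)$ for $k$ large, where $R$ is the constant from the definition (chosen so that $K \subset \D(0,R)$). Indeed $g_\Om > 0$ on $\C \setminus \D(0,R)$, so a root $z_j^{(k)}$ with $|z_j^{(k)}| \ge R$ would make $\tfrac{1}{n_k}\log|q_k(z_j^{(k)})| = -\infty$, contradicting uniform convergence to a positive function.

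Next, for $|z| \ge 4R$, a geometric-series estimate gives $\sum_j 1/(z-z_j) = n_k/z + O(n_k R/|z|^2)$, so $|q_k'(z)/q_k(z)|$ is comparable to $n_k/|z|$ uniformly in $k$, and
\[
\log|q_k'(z)| = \log|q_k(z)| + \log n_k - \log|z| + F_k(z),
\]
with $F_k(z) = O(R/|z|)$ bounded uniformly in $k$ and $z$. Writing $\log|q_k(z)| = n_k g_\Om(z) + n_k \epsilon_k(z)$, where $\sup_{|z| \ge R}|\epsilon_k| \to 0$ by $K$-regularity, then dividing by $n_k-1$ and subtracting $g_\Om(z)$, I obtain
\[
\frac{1}{n_k-1}\log|q_k'(z)| - g_\Om(z) = \frac{n_k\,\epsilon_k(z)}{n_k-1} + \frac{\bigl(g_\Om(z) - \log|z|\bigr) + \log n_k + F_k(z)}{n_k-1}.
\]

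The main obstacle I expect is handling $|z| \to \infty$: naively multiplying $\tfrac{1}{n_k}\log|q_k(z)|$ by $n_k/(n_k-1)$ introduces a residue $g_\Om(z)/(n_k-1)$ which blows up in $z$. The saving cancellation is that the $-\log|z|$ picked up from the approximation $q_k'/q_k \approx n_k/z$ kills the dominant $\log|z|$ part of $g_\Om(z)$, since $g_\Om(z) - \log|z|$ tends to the Robin constant as $z \to \infty$ and is therefore uniformly bounded on $\{|z| \ge 4R\}$. Consequently the numerator of the second fraction above is $O(\log n_k)$ uniformly in $z$, and both summands tend to $0$ uniformly as $k \to \infty$, establishing $K$-regularity of $(q_k')_k$ with any $R' \ge 4R$.
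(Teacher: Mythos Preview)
Your argument is correct, and it takes a genuinely different route from the paper. The paper first observes that $h_k := \tfrac{1}{n_k-1}\log|q_k'| - g_\Omega$ extends harmonically to $\infty$ and invokes the maximum principle to reduce the problem to a fixed circle $\partial\D(0,R_2)$; on that circle it passes from the uniform convergence $\tfrac{1}{n_k}\log|q_k| \to g_\Omega$ to convergence of the holomorphic derivatives $\tfrac{1}{n_k}\,q_k'/q_k \to g_\Omega' := 2\partial_z g_\Omega$, and then uses that $g_\Omega'$ has no zeros on $\{|z|\ge R\}$ to take logarithms safely. You instead bypass both the maximum principle reduction and the derivative of the Green's function: the elementary partial-fractions bound $\sum_j 1/(z-z_j) = n_k/z + O(n_kR/|z|^2)$ gives $\log|q_k'| = \log|q_k| + \log n_k - \log|z| + O(1)$ directly, and the cancellation between the stray $g_\Omega(z)/(n_k-1)$ term and the $-\log|z|/(n_k-1)$ term is exactly what the paper's harmonic-extension-to-$\infty$ argument encodes. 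Your approach is more self-contained and avoids the (true, but not entirely obvious) non-vanishing of $g_\Omega'$; the price is that it only yields convergence on $\{|z|\ge 4R\}$ rather than on $\{|z|\ge R_2\}$ for any $R_2 > R$, which is harmless for the stated conclusion.
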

When $\mu_k$ converges weak star to $\mu$, we write $\mu_k \wto \mu$.
The following theorem establishes under which conditions
$\mu_k$ converges weak star to the equilibrium measure $\omega$ on $K$. 
This is a generalization of Brolins Theorem \cite[Thm 16.1]{Brolin}.
\begin{thm}\label{thmBweakstarconv}
  Let $K \subset \C$ be a compact, non-polar and polynomially convex set 
  and let $\omega$ be the equilibrium measure on $K$.
  Suppose that the sequence of polynomials $(q_k)_{k>0}$ is $K$-regular.
  Then $\mu_k\wto \omega$ if and only if
  $(q_k)_{k>0}$ weakly centers on $J := \partial K$.
\end{thm}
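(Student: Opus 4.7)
My plan is to treat the two directions separately. For the easy implication, assume $\mu_k \wto \omega$. The $K$-regularity hypothesis already yields condition~1 of centering: uniform convergence of $\tfrac{1}{n_k}\log|q_k|$ to $g_\Om$ on $\C\setminus\D(0,R)$ forces $q_k^{-1}(\{0\}) \subset \D(0,R)$ eventually (any accumulation point of zeros would contradict $g_\Om>0$ there). For the weak form of condition~2, let $L$ be closed with $L\cap J = \emptyset$. The relevant mass sits in the compact set $L\cap\overline{\D(0,R)}$, so choose a continuous $\varphi$ with $\mathbf{1}_{L\cap\overline{\D(0,R)}} \le \varphi \le 1$ and $\mathrm{supp}\,\varphi$ disjoint from $J$. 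Then $\mu_k(L) \le \int \varphi\,d\mu_k \to \int \varphi\,d\omega = 0$, since $\omega$ is supported on $J$.

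For the converse, the central idea is to translate $K$-regularity into a convergence of logarithmic potentials. From the identity
\[
\tfrac{1}{n_k}\log|q_k(z)| = \tfrac{1}{n_k}\log|\gamma_k| - U^{\mu_k}(z)
\]
and the asymptotic $g_\Om(z) = \log|z| - \log\mathrm{cap}(K) + o(1)$ at infinity, one first concludes $\tfrac{1}{n_k}\log|\gamma_k| \to -\log\mathrm{cap}(K)$, and then
\[
U^{\mu_k}(z) \longrightarrow -g_\Om(z) - \log\mathrm{cap}(K) = U^\omega(z) \qquad \text{uniformly on } \C\setminus\D(0,R),
\]
using the standard identity between the Green's function and the equilibrium potential on $\Om$.

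The uniform confinement of zeros to $\overline{\D(0,R)}$ (after enlarging $R$ so that $K\subset\D(0,R)$ too) makes $(\mu_k)$ tight. Passing to a weak-star subsequential limit $\mu_{k_j}\wto\nu$, the weak centering hypothesis forces $\Supp\nu\subset J$. Since $w\mapsto\log|z-w|$ is continuous on $\overline{\D(0,R)}$ for each $|z|>R$, weak-star convergence gives pointwise convergence $U^{\mu_{k_j}}(z)\to U^\nu(z)$ on $\{|z|>R\}$. Combining this with the uniform potential convergence above, $U^\nu = U^\omega$ on $\C\setminus\overline{\D(0,R)}$; since both functions are harmonic on the connected open set $\Om$ (the measures being supported in $K$), the identity principle extends the equality to all of $\Om$.

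The final step, deducing $\nu=\omega$, is the main obstacle. Equality of potentials on $\Om$ does not suffice in general, but the additional constraint $\Supp\nu\subset J$ places us within reach of the classical uniqueness of the equilibrium measure. Concretely, at every regular boundary point $z_0\in J$, $g_\Om(z)\to 0$ as $\Om\ni z\to z_0$, so the identity $U^\nu = -g_\Om - \log\mathrm{cap}(K)$ on $\Om$, combined with the lower semicontinuity of $U^\nu$, yields $U^\nu \le -\log\mathrm{cap}(K)$ quasi-everywhere on $J$. A Fubini-type energy computation—using Frostman's identity $U^\omega = -\log\mathrm{cap}(K)$ quasi-everywhere on $K$, the fact that $\omega$ does not charge polar sets, and the positive definiteness of the logarithmic energy on signed measures of mass zero—then pins down $\nu=\omega$. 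Uniqueness of every weak-star subsequential limit upgrades the tight sequence $(\mu_k)$ to convergence of the whole sequence to $\omega$, completing the proof.
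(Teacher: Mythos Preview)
Your argument parallels the paper's almost exactly through the point where you obtain $U^{\nu}=U^{\omega}$ on $\Omega$ for any subsequential limit $\nu$ supported on $J$. The divergence is in how you conclude $\nu=\omega$ from this. The paper isolates this step as a separate lemma: for \emph{any} two measures supported on $J$ whose potentials agree near infinity, it shows the potentials agree on all of $\C$ (identity principle on $\Omega$, then Ransford's $\limsup$ theorem to pass to $J$, then a maximum-principle contradiction on each interior component of $K$), and invokes the standard ``equal potentials imply equal measures'' result. Your route instead exploits that one of the two measures is the equilibrium measure: from $U^{\nu}=U^{\omega}$ on $\Omega$ and regularity of q.e.\ boundary points you get $U^{\nu}\le -\log\mathrm{cap}(K)$ q.e.\ on $J$, and then finish with an energy argument.

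Both approaches work, but your final step as written is missing one ingredient. Positive definiteness of the logarithmic energy on signed measures of total mass zero requires both $\nu$ and $\omega$ to have finite energy, and nothing you have said so far rules out $\nu$ charging a polar set. The clean fix is the maximum principle for logarithmic potentials (Maria--Frostman): since $U^{\nu}\le -\log\mathrm{cap}(K)$ q.e.\ on $\Supp\nu\subset J$, the same bound holds on all of $\C$, whence $I(\nu)=\int U^{\nu}\,d\nu\le -\log\mathrm{cap}(K)=I(\omega)$ is finite. At that point you can either invoke positive definiteness as you intended, or simply observe that $\omega$ is the \emph{unique} energy minimiser among probability measures on $K$, so $I(\nu)\le I(\omega)$ forces $\nu=\omega$ directly. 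The paper's lemma, by contrast, is more elementary (no energy machinery) and slightly more general, since it never uses that one of the measures is the equilibrium measure.
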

(Weakly) centering on $J$ is not in general inherited by the sequence
of derivatives unlike centering on $K$.
It is clear that centering on $K$ is the same as centering
on $J$, when $K$ has no interior.
The next theorem shows that there is a more general class of compact sets 
(than those with empty interior), 
where weakly centering on $K$ implies weakly centering on $J$, 
when $(q_k)_k$ is $K$-regular.
\begin{thm}\label{thmNoinside}
  Let $K \subset \C$ be a compact, non-polar and polynomially convex set. 
  Suppose the equilibrium measure $\omega$ of $K$ satisfies
  $\omega(\overline{V}) = 0$ for every component $V$ of the interior of $K$.
  If $(q_k)_k$ is $K$-regular and weakly centers on $K$, 
  then $(q_k)_k$ weakly centers on $J:=\partial K$.
\end{thm}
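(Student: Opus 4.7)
My plan is to show that every weak-star subsequential limit $\mu^*$ of $(\mu_k)$ is supported on $J$; given this, weak centering on $J$ follows by a routine Portmanteau argument. Indeed, if some closed $L$ with $L \cap J = \emptyset$ had $\mu_{k_j}(L) \ge \varepsilon > 0$ along a subsequence, extracting a further sub-subsequence with $\mu_{k_j} \wto \mu^*$ and applying Portmanteau to the closed set $L$ yields $\mu^*(L) \ge \varepsilon$, contradicting $\Supp \mu^* \subset J$. A weak-star subsequential limit exists because the $\mu_k$ are probability measures supported in a fixed disk (centering condition~1).

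First I would verify $\Supp \mu^* \subset K$: weak centering on $K$ gives $\mu_k(\{z : d(z,K) \ge 1/n\}) \to 0$ for each $n$, and applying Portmanteau to the open enlargements $\{z : d(z,K) > 1/(2n)\}$ forces $\mu^*(\C \setminus K) = 0$. Next I would establish $p_{\mu^*} = p_\omega$ on $\Omega$, writing $p_\mu(z) := \int \log|z-w|\,d\mu(w)$. Decomposing $p_k = \tfrac{1}{n_k}\log|q_k| - \tfrac{1}{n_k}\log|\gamma_k|$ and comparing behavior at infinity using $g_\Omega(z) = \log|z| - \log\operatorname{cap}(K) + o(1)$ forces $\tfrac{1}{n_k}\log|\gamma_k| \to -\log\operatorname{cap}(K)$; then $K$-regularity yields $p_k \to p_\omega = g_\Omega + \log\operatorname{cap}(K)$ uniformly on $\C \setminus \D(0,R)$. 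For $z$ outside a fixed disk containing all the supports, $w \mapsto \log|z-w|$ is continuous and bounded, so weak convergence gives $p_{k_j}(z) \to p_{\mu^*}(z)$. Thus $p_{\mu^*} = p_\omega$ outside that disk, and since both functions are harmonic on $\Omega$ (which is connected, $K$ being polynomially convex), the identity principle extends the equality to all of $\Omega$.

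The crucial step is a balayage argument. For each component $V$ of $\operatorname{int}(K)$, $\partial V \subset J$ (a point of $\partial V$ lying in $\operatorname{int}(K)$ would sit in another component disjoint from $V$, contradicting membership in $\bar V$); moreover $\partial V$ is non-polar, since a bounded open set cannot have polar boundary ($\C$ minus a polar set is connected). Let $\nu_V$ denote the balayage of $\mu^*|_V$ onto $\partial V$, a positive measure on $\partial V$ with $\|\nu_V\| = \mu^*(V)$. Then $\tilde\mu := \mu^*|_J + \sum_V \nu_V$ is a probability measure supported on $J$, and the potential-preserving property of balayage gives $p_{\tilde\mu} = p_{\mu^*} = p_\omega$ on $\Omega$. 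By the uniqueness of balayage onto $J$ — two probability measures on $J$ with equal potential on $\Omega$ coincide; this is standard in logarithmic potential theory (cf.\ Saff--Totik, Chapter~II) — we conclude $\tilde\mu = \omega$. Hence $\omega = \mu^*|_J + \sum_V \nu_V$ as a sum of nonnegative measures on $J$, which forces $\nu_V \le \omega$ and therefore $\mu^*(V) = \|\nu_V\| \le \omega(\partial V) \le \omega(\bar V) = 0$ by hypothesis. Summing over components, $\mu^*(\operatorname{int}(K)) = 0$, so $\Supp \mu^* \subset J$.

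The main technical hurdle I anticipate is invoking the uniqueness of balayage onto $J$ cleanly — a classical fact, but one warranting explicit citation. The other ingredients (Portmanteau, the identity principle for harmonic functions on the connected domain $\Omega$, and the comparison $p_k \to p_\omega$ extracted from $K$-regularity) should be routine, as should the point-set verification that $\partial V \subset J$ and that $\partial V$ is non-polar.
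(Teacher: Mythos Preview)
Your proposal is correct and diverges from the paper at the key step. Both arguments begin by passing to a weak-star limit $\mu^*$ (supported in $K$ by weak centering) and using $K$-regularity together with the identity principle on the connected domain $\Omega$ to obtain $p_{\mu^*}=p_\omega$ on $\Omega$. At that point the paper invokes an external black box, Lemma~3 of Buff--Gauthier \cite{BuffGauthier}, to conclude $p_{\mu^*}=p_\omega$ on all of $\C$ and hence $\mu^*=\omega$, after which weak centering on $J$ follows from Theorem~\ref{thmBweakstarconv}. You instead sweep each $\mu^*|_V$ onto $\partial V$ to produce a probability measure $\tilde\mu$ on $J$ with $p_{\tilde\mu}=p_\omega$ on $\Omega$, and then identify $\tilde\mu=\omega$; the uniqueness fact you need here is exactly the paper's own Lemma~\ref{lem:potargument}, so no outside citation is required for that step. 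The hypothesis $\omega(\overline V)=0$ then kills each $\mu^*(V)$ via $\mu^*(V)=\|\nu_V\|\le\omega(\partial V)\le\omega(\overline V)$. Your route is more self-contained and makes transparent exactly where the hypothesis on $\omega$ enters; the paper's route is shorter but outsources the substance to \cite{BuffGauthier}. Incidentally, your argument also recovers the paper's stronger conclusion $\mu^*=\omega$: once $\Supp\mu^*\subset J$, every $\nu_V$ vanishes and $\tilde\mu=\mu^*$.
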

The following theorem shows, that centering on $K$ is fairly robust
for $K$-regular sequences.
Define $\log^+$ by $\log^+(w) = \max\{0, \log(w)\}$.
\begin{thm}\label{thmPerturb}
  Let $K \subset \C$ be a compact, non-polar and polynomially convex set 
  and let $(a_k)_k\subset \C$ be a sequence 
  with $\log^+ \absv{a_k} = o(n_k)$ as $k\to\infty$.
  If $(q_k)_{k>0}$ centers on $K$ and is $K$-regular,
  then $(q_k-a_k)_k$ centers on $K$ and is $K$-regular. 
\end{thm}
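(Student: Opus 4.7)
My plan is to prove $K$-regularity of $(q_k - a_k)$ first, as this flows from a perturbation estimate at infinity and will immediately yield Condition~1 of the centering definition. Condition~2 is the substantial point and will be reduced to an auxiliary lemma asserting that $u_k := \frac{1}{n_k}\log|q_k|$ converges to $g_\Om$ uniformly on compact subsets of $\Om$.

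For $K$-regularity I would write $q_k(z) - a_k = q_k(z)\bigl(1 - a_k/q_k(z)\bigr)$. Given $\epsilon > 0$, since $g_\Om(z) \to \infty$ as $z \to \infty$ one can pick $R$ so that $g_\Om \geq 3\epsilon$ on $\{|z| \geq R\}$; by $K$-regularity of $(q_k)$ one has $|q_k(z)| \geq e^{2\epsilon n_k}$ there for $k$ large, while $|a_k| \leq e^{\epsilon n_k}$ from $\log^+|a_k| = o(n_k)$. Hence $|a_k/q_k(z)| \leq e^{-\epsilon n_k}$, the correction term $\frac{1}{n_k}\log|1 - a_k/q_k|$ decays uniformly to $0$, and the required uniform convergence of $\frac{1}{n_k}\log|q_k - a_k|$ to $g_\Om$ on $\{|z| \geq R\}$ follows. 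The same inequality $|q_k| > |a_k|$ on this region rules out zeros of $q_k - a_k$ outside $\D(R)$ for $k$ large, which is exactly Condition~1.

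The key lemma I would establish next is: if $(q_k)$ is $K$-regular and centers on $K$, then $u_k \to g_\Om$ uniformly on compact subsets of $\Om$. The sequence $(u_k)$ is subharmonic on $\C$ and uniformly bounded above on compacts (from $K$-regularity on $\{|z| = R_1\}$ plus the maximum principle pushed inward). Any weak-star subsequential limit $\nu$ of the root measures $(\mu_k)$ has $\Supp \nu \subset K$ by Condition~2 of centering on $K$, so $p_\nu$ is harmonic on $\Om$; the principle of descent combined with $K$-regularity forces $p_\nu = p_\omega$ on $\{|z| \geq R_1\}$, and unique continuation of harmonic functions on the connected domain $\Om$ extends the equality throughout $\Om$. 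Every $L^1_{loc}$-subsequential limit of $(u_k)$ on $\Om$ therefore equals $g_\Om$, whence $u_k \to g_\Om$ in $L^1_{loc}(\Om)$; a standard upgrade for subharmonic sequences converging to a continuous limit (Hartogs' lemma) then promotes this to uniform convergence on compacts of $\Om$.

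Condition~2 for $(q_k - a_k)$ is then straightforward: for closed $L$ with $L \cap K = \emptyset$, Condition~1 lets us assume $L$ compact. Then $c_L := \min_L g_\Om > 0$, and the lemma yields $u_k \geq 2c_L/3$ on $L$ for $k$ large, i.e.\ $|q_k| \geq e^{2n_k c_L/3}$, which is incompatible with $|q_k(z)| = |a_k| \leq e^{n_k c_L/3}$ at a hypothetical zero $z \in L$ of $q_k - a_k$. Hence $q_k - a_k$ has no zeros in $L$ for $k$ sufficiently large, trivially giving the required bound. The main obstacle is the key lemma: since $K$-regularity plus centering on $K$ is strictly weaker than $\mu_k \wto \omega$ --- witness $q_k(z) = z^{n_k}$ on $K = \overline{\D}$, where $\mu_k = \delta_0 \neq \omega$ --- Theorem~\ref{thmBweakstarconv} does not apply, and one must argue at the level of potentials, using that all subsequential limits of $(\mu_k)$ live on $K$ to force their potentials to coincide with $p_\omega$ on $\Om$ even when the measures themselves do not.
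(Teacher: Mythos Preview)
Your key lemma is false, and consequently your Condition~2 argument proves too much. Take $K=\overline{\D}$ and $q_k(z)=(z-2)\,z^{\,n_k-1}$. This sequence is $K$-regular and centers on $K$ (the single root at $2$ gives $n_k\mu_k(L)\le 1$ for any closed $L$ disjoint from $K$), yet $u_k(2)=-\infty$ for every $k$, so $u_k$ cannot converge uniformly to $g_\Om$ on any compact in $\Om$ containing $2$. Hartogs' lemma supplies only the upper estimate $u_k\le g_\Om+\epsilon$ eventually on compacts; the lower bound you need is exactly what fails at the (boundedly many) roots of $q_k$ lying in $\Om$. And your conclusion that $q_k-a_k$ has \emph{no} zeros in $L$ for $k$ large is stronger than what is true: with $a_k\equiv 0$ it asserts that $q_k$ is eventually zero-free on $L$, which the same example refutes at $L=\{2\}$.

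The paper's proof addresses precisely this obstruction. It does not aim for uniform convergence on a two-dimensional compact; instead it works on the one-dimensional set $\partial\Om_r$. After passing to a subsequence along which the individually labelled roots $\zeta_{k,j}$ converge, it adjusts $r$ so that the finitely many limit roots in $\Om$ all lie at positive distance from $\partial\Om_r$, hence the roots of $q_k$ avoid a fixed neighbourhood of $\partial\Om_r$ for $k$ large. On such a root-free neighbourhood one does get uniform convergence of $u_k$ to $g_\Om$ (this is \cite[Lemma~3]{HPU1}), forcing $\absv{q_k}>e^{\epsilon n_k}>\absv{a_k}$ on $\partial\Om_r$. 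A Rouch\'e/proper-map argument then shows that $q_k$ and $q_k-a_k$ have the \emph{same} number of zeros in $\Om_r$, which is bounded by the centering hypothesis on $(q_k)_k$. Your potential-theoretic setup (subsequential limits of $\mu_k$ supported in $K$, their potentials agreeing with $p_\omega$ on $\Om$) is correct and is also used implicitly in the paper; what is missing is this localisation to a curve that dodges the roots.
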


The above theorem extends our previous results on zero distributions,
to value distributions, since zeros of $q_k - a_k$ are clearly just
the preimages of $a_k$ under $q_k$.

Combining the previous theorems, we get
\begin{thm}\label{thmCombined}
  Let $K \subset \C$ be a compact, non-polar and polynomially convex set, 
  and let $\omega$ be the equilibrium measure on $K$. 
  Suppose $K$ has the property that
  $\omega(\overline{V}) = 0$ for every component $V$ of the interior of $K$.
  Let $(a_k)_k\subset\C$ satisfy 
  $\log^+ \absv{a_k} = o(n_k)$.
  If $(q_k)_k$ is $K$-regular and centers on $K$,
  then for every $m = 0, 1, \ldots$ the sequence of perturbed $m$th derivatives
  $(q_k^{(m)} - a_k)_k$ is $K$-regular, centers on $K$, 
  and has root distributions converging weak star to $\omega$.
\end{thm}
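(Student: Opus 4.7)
The plan is to assemble Theorem \ref{thmCombined} from the preceding theorems in a single linear chain, with the only real work being an induction on $m$ to pass from one derivative to many.

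First I would fix $m \geq 0$ and run an induction on $m$ using Theorems \ref{thmInheritCenter} and \ref{thmInheritKreg} simultaneously: the base case $m = 0$ is the hypothesis, and the inductive step applies the two inheritance theorems to $(q_k^{(m-1)})_k$, yielding that $(q_k^{(m)})_k$ centers on $K$ and is $K$-regular. Note the degree of $q_k^{(m)}$ is $n_k - m$, but since $m$ is fixed and $n_k \to \infty$ this does not affect the asymptotic notions.

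Next I would apply the perturbation result, Theorem \ref{thmPerturb}, to the sequence $(q_k^{(m)})_k$ with perturbations $(a_k)_k$. The required hypothesis $\log^+|a_k| = o(n_k - m)$ is equivalent to the given $\log^+|a_k| = o(n_k)$ because $n_k - m \sim n_k$. This produces the first two conclusions of the theorem at once: $(q_k^{(m)} - a_k)_k$ is $K$-regular and centers on $K$.

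To obtain weak-star convergence of the root distributions, I would pipe this through Theorems \ref{thmNoinside} and \ref{thmBweakstarconv}. Centering on $K$ implies weakly centering on $K$, since for any closed $L$ disjoint from $K$ the centering bound gives $\mu_{q_k^{(m)} - a_k}(L) \leq M/(n_k - m) = o(1)$. Then the interior hypothesis $\omega(\overline{V}) = 0$ lets Theorem \ref{thmNoinside} upgrade weakly centering on $K$ to weakly centering on $J = \partial K$, and finally Theorem \ref{thmBweakstarconv} converts $K$-regularity plus weakly centering on $J$ into $\mu_{q_k^{(m)} - a_k} \wto \omega$.

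The proof is essentially pure assembly, so there is no serious obstacle; the only bookkeeping point is to verify once and for all that the growth condition on $(a_k)_k$, the notions of $K$-regularity, and the centering bounds are all insensitive to the drop in degree from $n_k$ to $n_k - m$ for fixed $m$. Everything else has been done by the earlier theorems.
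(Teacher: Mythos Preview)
Your proposal is correct and matches the paper's own proof essentially step for step: induction via Theorems~\ref{thmInheritCenter} and~\ref{thmInheritKreg}, then Theorem~\ref{thmPerturb} (with the same remark that $o(n_k)=o(n_k-m)$), then Theorem~\ref{thmNoinside}, then Theorem~\ref{thmBweakstarconv}. The only extra detail you spell out---that centering on $K$ trivially implies weakly centering on $K$---is left implicit in the paper but is indeed needed to invoke Theorem~\ref{thmNoinside}.
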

This result generalizes \cite[Theorem 1]{OkuyamaVigny}, 
to the vastly larger non-dynamical setting,
at the expense of the condition that $\omega(\overline{V}) = 0$ 
 for every component $V$ of the interior of $K$.
 
\section{Proofs of the results}
The potential of a finite Borel measure $\mu$ is given by
\[
  p_\mu(z) = \int \log \absv{z-w}\, d\mu(w),
\]
where we have used the sign convention of \cite{Ransford}.

The Green's function $g_\Omega$ with pole at infinity is 
$g_\Omega = p_\omega - I(\omega)$,
where $I(\omega)$ is the energy of the equilibrium measure for $K$, see \cite{Ransford}.
In particular we have
\[
  g_\Omega(z) = \log\absv{z} - I(\omega) + o(1), \text{ as }z\to \infty.
\]

We start with the proof of Theorem \ref{thmInheritKreg}.
\begin{proof}
  If $q$ is a non-constant polynomial, it is well known that 
  the roots of $q'$ lie in the convex hull of the zero locus of $q$.
  In particular the zero locus of $q_k'$ is contained in $\D(0,R)$ for $k$
  sufficiently big.
  We can assume $K \subset \D(0,R)$ by possibly increasing $R$.

  Take $R_2 > R$.
  We will show that
  $\frac{1}{n_k-1} \log \absv{q_k'}
    = g_\Omega + o(1)$
  uniformly outside $\D(0, R_2)$.
  Since for each $k$,
  $\frac{1}{n_k-1} \log \absv{q_k'(z)} = \log \absv{z} + O(\frac{1}{\lvert z \rvert})$,
  as $z \to \infty$, and $g_\Omega(z) = \log \absv{z} + O(\frac{1}{\lvert z \rvert})$ as $z \to \infty$, the function
  $h_k = \frac{1}{n_k-1} \log \absv{q_k'}-g_\Omega$
  is harmonic outside $\D(0, R)$ for sufficiently large $k$ and
  extends harmonically to $\infty$, by letting
  $h_k(\infty) = \lim_{z\to\infty} h_k(z) < \infty$.
  By the maximum principle for harmonic functions, it is enough to show
  that $h_k \to 0$ uniformly on $\partial \D(0, R_2)$ as
  $k \to \infty$.
  I.e.\ if $\frac{1}{n_k-1} \log \absv{q_k'} \to g_\Omega$ uniformly on
  $\partial \D(0, R_2)$, then
  $\frac{1}{n_k-1} \log \absv{q_k'} \to g_\Omega$
  uniformly on $\C \setminus \D(0, R_2)$.

  Since $g_\Omega$ is harmonic on $\Omega$,
  $g_\Omega' := 2 \frac{\partial}{\partial z} g_\Omega$
  is a holomorphic function.
  Also, since $g_\Omega = p_\omega - I(\omega)$,
  $g_\Omega$ has no critical points for $\absv{z} \geq R$.

   Taking $2 \frac{\partial}{\partial z}$ derivatives on both sides of
  \[
    \lim_{n\to\infty}{\textstyle\frac{1}{n_k}}\log|q_k(z)| = g_\Om(z),
  \]
  we get by local uniform convergence
  \[
    \frac{1}{n_k} \frac{q_k'(z)}{q_k(z)} = g'_\Omega(z) + o(1),
  \]
  where $o(1) \to 0$ as $k \to \infty$ locally uniformly on $\absv{z} \geq R$.
  Hence $q_k'(z) = (g_\Omega'(z) + o(1))n_k q_k$, so
  \begin{align*}
    \frac{1}{n_k-1} \log \absv{q_k'(z)}
    = & \phantom{+} \frac{1}{n_k-1} \log(n_k)              \\
      & + \frac{1}{n_k-1} \log(\absv{g_\Omega'(z) + o(1)}) \\
      & + \frac{1}{n_k-1} \log \absv{q_k(z)}.
  \end{align*}
  Now as $k\to \infty$,
  $\frac{1}{n_k-1} \log(n_k) = o(1)$.
  Also $g_\Omega'(z) \neq 0$ when $\absv{z} \geq R$, so
  $\frac{1}{n_k-1} \log \absv{g_\Omega'(z) + o(1)} = o(1)$.
  Finally,
  $\frac{1}{n_k-1} \log \absv{q_k(z)} = g_\Omega(z) + o(1)$.
  Here the $o(1)$-term in each of the three expression is uniform
  on $\partial\D(0, R_2)$.
  So we get
  \[
    \frac{1}{n_k-1} \log \absv{q_k'(z)} \to g_\Omega(z),
  \]
  uniformly on $\partial\D(0, R_2)$, finishing the proof.
\end{proof}

We will use the following lemma to prove Theorem \ref{thmBweakstarconv}.

\begin{lemma}\label{lem:potargument}
  Suppose $\mu$ and $\nu$ are finite Borel measures supported on $J$.
  If $p_\mu(z) = p_\nu(z)$, outside $\D(0,R)$ for some $R$,
  then $\mu = \nu$.
\end{lemma}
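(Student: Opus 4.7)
My plan is to subtract and work with the finite real signed Borel measure $\sigma := \mu - \nu$, which is supported on $J$ and satisfies $p_\sigma \equiv 0$ outside $\D(0,R)$; I must show $\sigma = 0$. After enlarging $R$ if necessary so that $J \subset \D(0,R)$, the strategy is to read off all complex moments of $\sigma$ from the asymptotic expansion of $p_\sigma$ at infinity, and then to conclude via Stone--Weierstrass.

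For $|z| > R$ and $w \in J$, the identity $\log|z-w| = \log|z| + \operatorname{Re}\log(1 - w/z)$ together with the power series for $\log(1-u)$ gives
\[
  \log|z-w| = \log|z| - \operatorname{Re}\sum_{k\geq 1} \frac{w^k}{k z^k},
\]
uniformly in $w \in J$ for each fixed $z$ with $|z|>R$. Integrating termwise against $\sigma$ yields
\[
  p_\sigma(z) = \sigma(J)\log|z| - \operatorname{Re}\sum_{k\geq 1} \frac{M_k}{k z^k}
  \qquad (|z| > R),
\]
where $M_k := \int w^k\,d\sigma(w)$. Vanishing of $p_\sigma$ outside $\D(0,R)$ forces $\sigma(J) = 0$, since $\log|z|$ is the only unbounded term; the remaining relation says that the holomorphic function $f(z) := \sum_{k\geq 1} M_k/(k z^k)$ on $\{|z|>R\}$ has vanishing real part. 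By Cauchy--Riemann $f$ is then constant, and since $f(z) \to 0$ as $z \to \infty$, in fact $f \equiv 0$. Hence $M_k = 0$ for every $k \geq 0$.

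Because $\sigma$ is real, taking complex conjugates also yields $\int \bar w^k\,d\sigma = 0$ for all $k \geq 0$, so every polynomial in $w$ and $\bar w$ integrates to zero against $\sigma$. These polynomials form a unital subalgebra of $C(J)$ that separates points of the compact set $J$, so by Stone--Weierstrass they are dense in $C(J)$. Hence $\int f\,d\sigma = 0$ for every $f \in C(J)$, and the Riesz representation theorem gives $\sigma = 0$, i.e., $\mu = \nu$. The only real technical point is justifying termwise integration in the Laurent expansion, which is immediate from the uniform convergence of the series in $w \in J$; everything else is routine complex analysis.
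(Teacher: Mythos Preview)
Your argument has a genuine gap at the sentence ``so every polynomial in $w$ and $\bar w$ integrates to zero against $\sigma$.'' From $M_k=\int w^k\,d\sigma=0$ and (by conjugation) $\int \bar w^k\,d\sigma=0$ you only know that $\sigma$ annihilates the \emph{linear span} of $\{w^k,\bar w^k:k\ge 0\}$, i.e.\ the harmonic polynomials. You have not shown that mixed monomials $w^j\bar w^k$ integrate to zero, and in general they do not: take $\mu$ and $\nu$ to be normalized arclength on $\{|z|=2\}$ and $\{|z|=1\}$ respectively. Then $p_\mu(z)=p_\nu(z)=\log|z|$ for $|z|>2$, all $M_k$ vanish, yet $\int|w|^2\,d\mu=4\neq 1=\int|w|^2\,d\nu$. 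Stone--Weierstrass needs an algebra, and the functions you actually control do not form one; so the appeal to Stone--Weierstrass is misplaced.

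The example above is not a counterexample to the lemma itself because the paper's standing hypothesis that $K$ be polynomially convex rules it out ($\C\setminus K$ must be connected). But your proof never invokes that hypothesis, and it is essential. One can repair your approach by replacing Stone--Weierstrass with the Walsh--Lebesgue theorem: when $\C\setminus K$ is connected, harmonic polynomials are uniformly dense in $C_\R(\partial K)$, and then $\int h\,d\sigma=0$ for all harmonic polynomials $h$ does force $\sigma=0$. The paper's own proof uses the connectedness of $\Omega$ in a different way: since $p_\mu-p_\nu$ is harmonic on $\Omega$ and vanishes near $\infty$, the identity principle gives $p_\mu=p_\nu$ on all of $\Omega$; the equality is then pushed to $J$ and into the interior of $K$ via upper\nobreakdash-semicontinuity of potentials and the maximum principle, after which the Riesz uniqueness theorem yields $\mu=\nu$.
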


\begin{proof}
  Since both $\mu$ and $\nu$ are supported on $J$, the potentials
  $p_\mu$ and $p_\nu$ are harmonic on $\Omega$.
  Since $p_\mu$ and $p_\nu$ agree on $\lvert z \rvert > R$, they
  agree on
  all of $\Omega$, by the identity principle of harmonic functions.

  Since $\Omega$ is connected, it follows from \cite[Theorem
    3.8.3]{Ransford},
  that for every $z \in \partial \Omega = J$, we have
  \[
    p_\mu(z) 
    = \limsup_{\substack{\zeta \in \Omega\\ \zeta \to z}} p_\mu(\zeta)
    = \limsup_{\substack{\zeta \in \Omega\\ \zeta \to z}} p_\nu(\zeta)
    = p_\nu(z).
  \]
  So $p_\mu = p_\nu$ on $J$.

  We know that $p_\mu = p_\nu$ except possibly on components of the
  interior
  of $K$.
  Towards a contradiction suppose that there exists a component $U$
  of the interior of $K$,
  on which $p_\mu \neq p_\nu$.
  We have already shown $p_\mu = p_\nu$ on $\partial U \subset J$.
  Also, $p_\mu$ and $p_\nu$ are harmonic on $U$.
  We can assume that there exist $z_0 \in U$
  and $\epsilon > 0$ so that $p_\mu(z_0) > p_\nu(z_0) + \epsilon$,
  possibly by exchanging the roles of $p_\mu$ and $p_\nu$.
  Let $V$ be a connected component of the set
  $\{z \in U: p_\mu(z) > p_\nu(z) + \epsilon\}$.
  Applying the maximum principle to $p_\mu - p_\nu$,
  we see that $V$ is not compactly contained in $U$.
  Let $w \in \partial U \cap \partial V$.
  Then, again by \cite[Theorem 3.8.3]{Ransford},
  \[
    p_\mu(w)
    = \limsup_{\substack{\zeta \in V\\ \zeta \to w}} p_\mu(\zeta)
    \geq \limsup_{\substack{\zeta \in V\\ \zeta \to w}} p_\nu(\zeta)
       + \epsilon
    = p_\nu(w) + \epsilon.
  \]
  contradicting $p_\mu(w) = p_\nu(w)$.

  We have shown that $p_\mu = p_\nu$ on $\C$.
  By uniqueness (see \cite[Corollary 3.7.5]{Ransford}), $\mu = \nu$.
\end{proof}

We can now prove Theorem \ref{thmBweakstarconv}.

\begin{proof}
  The ``only if'' part of the Theorem is straight forward. 
  The $K$-regularity of the sequence $(q_k)_k$ implies 
  weakly centering condition $1$. 
  So if $(q_k)_k$ does not center weakly on $J$, 
  we can find a compact set $L$ not meeting $J$
  and $\epsilon > 0$, so that for some subsequence we have 
  $\mu_{k_j}(L) > \epsilon$.
  Any limit point $\mu$ of this subsequence, will have to charge $L$,
  so we cannot have $\mu_k \wto \omega$.

  To show the ``if'' part assume that $(q_k)_k$ weakly centers on $J$.
  By the centering assumption, the root locus of $q_k$ is contained in $\D(R)$
  for $k \geq N$.
  Hence $\mu_k$ has uniformly bounded support for $k \geq N$,
  and thus forms a precompact family.
  So we only need to show, that the only possibly limit point is $\omega$.

  Replace $(q_k)$ by a subsequence, such that $\mu_k \wto \mu$.
  By weak centering, $\mu$ is supported on $J$.

  Notice 
  \begin{align*}
    \frac{1}{n_k} \log \absv{q_k(z)}
    & = \frac{1}{n_k} \log \absv{\gamma_k}
        + \frac{1}{n_k} \log \absv{\prod_{j=1}^{n_k}(z-z_{k,j})}, \\
    & = \frac{1}{n_k} \log \absv{\gamma_k}
        + \int \log \absv{z-w} d\mu_k(w) \\
    & = \frac{1}{n_k} \log \absv{\gamma_k} + \log \absv{z} + o(1) 
    \quad\textrm{as}\quad |z|\to\infty,
  \end{align*}
  where $z_{k,j}$ are the roots of $q_k$ repeated with multiplicity.

  When $\absv{z} > R$, we have
  $\int \log \absv{z-w} d\mu_k(w) \to p_\mu$ (since $\mu_k\wto\mu$)  
  and $\frac{1}{n_k} \log \absv{q_k(z)} \to g_\Omega(z)
  = p_\omega(z) - I(\omega)$ by $K$-regularity. 
  Hence $\frac{1}{n_k} \log \absv{\gamma_k} \to -I(\omega)$
  and $p_\mu = p_\omega$ on $\C \setminus \overline{\D}(0, R)$.
  It now follows by Lemma \ref{lem:potargument} that $\mu = \omega$,
  completing the proof.
\end{proof}
The proof of Theorem \ref{thmNoinside} hinges on a lemma by Buff and Gauthier.

\begin{proof}[Proof of Theorem \ref{thmNoinside}]
  We will show that any weak-star limit of $\mu_k$ equals $\omega$.
  Since $(\mu_k)_k$ is precompact, it follows that
  $\mu_k \wto \omega$.
  Then Theorem \ref{thmBweakstarconv} shows that 
  $(q_k)$ weakly centers on $J$.

  Let $\mu$ be a limit point of $(\mu_k)_k$, and pass to a subsequence
  such that $\mu_k \wto \mu$.

  Arguing as in the proof of Theorem \ref{thmBweakstarconv},
  we obtain $p_\mu = p_\omega$ on $\Omega$.
  With $u = p_\mu$ and $v = p_\omega$, we get that
  $u = v$ on $\C$ by Lemma 3 of \cite{BuffGauthier}.
  So $p_\mu = p_\omega$ on $\C$, and by uniqueness $\mu = \omega$
  (see \cite[Corollary 3.7.5]{Ransford}).
\end{proof}
\begin{proof}[Proof of Theorem \ref{thmPerturb}]
  Outside $\overline{\D}(0, R)$, we have
  $\frac{1}{n_k} \log \absv{q_k} > \frac{1}{2} g_\Omega$, for $k$ sufficently big, 
  and hence $\absv{q_k} > \exp(\epsilon n_k)$, 
  for $2\epsilon = \min\{g_\Omega(z) : \absv{z} = R\} > 0$.
  Combining this with the assumption on $a_k$, we obtain $\frac{a_k}{q_k} \to 0$
  uniformly outside $\overline{\D}(0, R)$.
  Therefore
  \[
    \frac{1}{n_k} \log \absv{q_k - a_k} 
    = \frac{1}{n_k} \log \absv{q_k} 
       +\frac{1}{n_k} \log \absv{ 1 - a_k/q_k} 
    = g_\Omega + o(1),
  \]
  uniformly outside $\overline{\D}(0, R)$,
  so $(q_k - a_k)_k$ is $K$-regular.

  Before proving that $(q_k - a_k)_k$
  centers on $K$, we note that since
  $\frac{1}{n_k}\log\absv{q_k}
  = \frac{1}{n_k}\log \absv{\gamma_k} + p_{\mu_k}$,
  where $\mu_k$ is the zero distribution of $q_k$,
  we have that $\frac{1}{n_k}\log \absv{\gamma_k} \to -I(\omega)$.

  Let $r > 0$ be arbitrary and let
  $\Omega_r = \{z \in \Omega : d(z, K) > r\}$,
  where the semi-distance
  $d(z, K) = \inf_{\zeta \in K} \absv{z - \zeta}$.
  
  To show that $(q_k - a_k)_k$ centers on $K$, it is enough to show
  that for every subsequence of $(q_k - a_k)_k$, there exists a
  subsubsequence such that the number of zeroes of $q_k - a_k$
  in $\Omega_r$ is uniformly bounded.

  So suppose we have passed to an arbitrary subsequence.
  Possibly passing to a further subsequence, we can assume $\mu_k$ 
  converges weak star to a probability measure $\mu$. 
  By $K$-regularity, $p_\mu=p_\omega$ on $\C\setminus\D(0,R)$ 
  and by centering, $\Supp(\mu)\subseteq K$. 
  Hence arguing as in the proof of 
  Lemma~\ref{lem:potargument} we obtain $p_\mu=p_\omega$ on $\Omega$.
  
  Let $\zeta_{k,j}$ for $j=1,\ldots, n_k$, be the roots of $q_k$,
  numbered so that $d(\zeta_{k,i}, K) \geq d(\zeta_{k,j}, K)$,
  whenever $1 \leq i < j \leq n_k$. 
  By the centering assumption $\absv{\zeta_{k,j}} < R$,
  for $k \geq k_0$.
  Hence by a standard diagonal argument, we can pass to a subsequence,
  such that each of the sequences $(\zeta_{k,j})_k$ converges, and we let
  $\zeta_j = \lim_{k\to \infty} \zeta_{k,j}$.
  By our numbering, 
  $d(\zeta_i, K) \geq d(\zeta_j, K)$ whenever $1 \leq i < j$.
 
  Since $(q_k)_k$ centers on $K$, there is a minimal $m \geq 1$ such that
  $\zeta_{m} \notin \overline{\Omega_r}$.
  By decreasing $r$ slightly if necessary, we can suppose that 
  $\zeta_j \in \Omega_r$ for every $j < m$,
  while retaining $\zeta_{m} \notin \overline{\Omega_r}$.
  Let $\delta > 0$ be such that $d(\zeta_j, \partial \Omega_r) > \delta$,
  for $j = 1, \ldots, m$.

  There exists $k_0$ such that when $k \geq k_0$, 
  $d(\zeta_{k, j}, \partial \Omega_r) > \delta/2$, for $j=1, \ldots ,m$.
  By the numbering of $\zeta_{k, j}$, we also have
  $d(\zeta_{k,j}, \partial \Omega_r) > \delta/2$, for $j > m$,
  whenever $k \geq k_0$.
  So the open set $U = \{ z \in \C: d(z, \partial \Omega_r) < \delta/2 \}$,
  does not meet any zero of $q_k$, when $k\geq k_0$.

  It follows from \cite[Lemma 3]{HPU1}, that
  $\frac{1}{n_k}\log \absv{(1/\gamma_k) q_k}$ converges uniformly
  to the potential $p_\mu=p_\omega$ on $\partial \Omega_r$.
  Since $\frac{1}{n_k}\log \absv{\gamma_k} \to -I(\omega)$,
  we get that 
  $\frac{1}{n_k}\log \absv{q_k}$ converges uniformly to $g_\Omega$
  on $\partial \Omega_r$.
  As $g_\Omega$ is positive and continuous on $\partial \Omega_r$, 
  there exists $k_1$ and $\epsilon > 0$, so that
  $\absv{q_k} > \exp( \epsilon n_k )$ on $\partial \Omega_r$,
  when $k \geq k_1$.

  By assumption on $a_k$, we have that
  $\absv{a_k} + 1 \leq \exp( \epsilon n_k )$,
  when $k$ is sufficiently big.
  In particular, when $k$ is sufficently big,
  the preimage $q_k^{-1}(\D(0, \absv{a_k}+1)$
  does not meet $\partial \Omega_r$.
  Hence each connected component $V$ of $q_k^{-1}(\D(0, \absv{a_k}+1)$
  is either contained in $\Omega_r$, or disjoint from $\Omega_r$.
  And since the restriction $q_k : V \to \C$ is proper,
  $q_k$ and $q_k-a_k$ have the same number of roots in $V$,
  counted with multiplicity.
  Since this holds for every connected component
  of $q_k^{-1}(\D(0, \absv{a_k}+1)$,
   we get that $q_k$ and $q_k - a_k$ have the same number of roots in
  $\Omega_r$.
  Finally, since $(q_k)_k$ centers on $K$, the latter number
  of roots is uniformly bounded, for $k$ sufficiently big.
\end{proof}

It only remains to prove Theorem \ref{thmCombined}.
\begin{proof}
  By Theorem \ref{thmInheritCenter} and induction,
  $(q_k^{(m)})_k$ centers on $K$, for all $m$.
  By Theorem \ref{thmInheritKreg} and induction, 
  $(q_k^{(m)})_k$ is $K$-regular, for all $m$.
  Since $\log^+ \absv{a_k} = o(n_k)$ implies $\log^+ \absv{a_k} = o(n_k - m)$,
  Theorem \ref{thmPerturb} shows that also the sequences
  $(q_k^{(m)}-a_k)_k$ are $K$-regular and center on $K$, for all $m$.
  Theorem \ref{thmNoinside} implies that
  $(q_k^{(m)} - a_k)_k$ weakly centers on $J$,
  for all $m$.
  Finally, by Theorem~\ref{thmBweakstarconv}, 
  the zero distribution of $(q_k^{(m)} - a_k)_k$ converges weak star
  to $\omega$, for all $m$.
\end{proof}
\section{Two applications in dynamics}
In this section, we derive a few convergence results,
to demonstrate the usefulness of Theorem \ref{thmCombined}.

For the first result, we consider a fixed polynomial $P(z) = z^n + o(z^n)$,
of degree $n > 1$.
Such a polynomial generates a dynamical system via its iterates $P^k$,
where $P^0$ is the identity and $P^k = P \circ P^{k-1}$, for $k > 0$.

The basin of infinity is defined as
\[
  \Omega(P) = \{ z \in \C: (P^k(z))_k \text{ is unbounded} \}
    = \{ z \in \C: P^k(z) \to \infty \}
\]
while the \emph{filled Julia set} $K(P)$ is the complement
$K(P) = \C \setminus \Omega(P)$.

It is known that $K(P)$ is compact, polynomially convex and of capacity $1$,
see \cite{Milnor}.
The equilibrium measure $\omega$ on $K(P)$ has a special dynamical
meaning, in that it is the
\emph{balanced invariant measure},
in the sense that if $P^{-1}_j$, $j=1,\ldots,n$,
is complete assignment of inverse branches,
then $\omega(P^{-1}_j(U)) = \frac{1}{n}\omega(U)$,
for every $j=1,\ldots, n$ and every Borel set $U \subset \C$.

The components of the interior of $K(P)$ and $\Omega(P)$ are called
the Fatou components of $P$ and their union form the Fatou set of
$P$. 
By Sullivans no-wandering domains Theorem, 
every Fatou component is eventually periodic 
and by the Fatou-Shishikura Theorem there are at most $2n-2$ cycles of periodic components.
The complement of the Fatou set is the Julia set
$J(P) = \partial \Omega(P)$.
We start by showing that the condition that every component $V$ of the
interior of $K(P)$ satisfies $\omega(\overline{V}) = 0$, which appears
in Theorem \ref{thmNoinside} and Theorem \ref{thmCombined} is not an
exceptional occurrence in polynomial dynamics.

\begin{thm}\label{thmErgodic}
  Let $P$ be a polynomial of degree $n > 1$. 
  Let $L\subset J(P)$ be a compact subset 
  with $P(L) = L$. 
  Then either $L = J(P)$ or $\omega(L) = 0$.
\end{thm}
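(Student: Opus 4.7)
The plan is to reduce the statement to the ergodicity of the balanced measure $\omega$ for the dynamics of $P$ on $J(P)$, combined with the fact that $\mathrm{supp}(\omega) = J(P)$. Both are classical facts about the measure of maximal entropy for a polynomial of degree $n > 1$, so I would invoke them with a reference rather than reprove them. Suppose $\omega(L) > 0$; the goal is to prove $L = J(P)$.

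The first step is to replace $L$ by its total backward orbit. Since $P(L) = L$ forces $L \subseteq P^{-1}(L)$, the sets $L_k := P^{-k}(L)$ form an increasing sequence, so $B := \bigcup_{k \ge 0} L_k$ satisfies $P^{-1}(B) = B$. Moreover, invariance of $\omega$ gives $\omega(L_k) = \omega(L)$ for every $k$, and continuity of measure from below then yields $\omega(B) = \omega(L) > 0$. Applying ergodicity of $(J(P), P, \omega)$ to the backward-invariant set $B$ forces $\omega(B) \in \{0, 1\}$, hence $\omega(B) = 1$, and consequently $\omega(L) = 1$.

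For the last step, I would use $\mathrm{supp}(\omega) = J(P)$: if $L$ were a proper closed subset of $J(P)$, then $J(P)\setminus L$ would be a nonempty relatively open subset of $\mathrm{supp}(\omega)$, hence of positive $\omega$-measure, contradicting $\omega(L) = 1 = \omega(J(P))$. This yields $L = J(P)$ and finishes the dichotomy.

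The main conceptual step is the construction of $B$ together with the observation $\omega(B) = \omega(L)$; without this, one only knows $L \subseteq B$ and cannot directly apply ergodicity to $L$ itself (which is merely forward, not backward, invariant). The rest amounts to citing standard dynamical results about $\omega$. I do not anticipate genuine obstacles beyond making sure these two facts (ergodicity and full support) are invoked in a form valid for every polynomial of degree at least two, which is standard (see, e.g., Brolin, Freire–Lopes–Mañé, or Ljubich).
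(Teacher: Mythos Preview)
Your proposal is correct and follows essentially the same route as the paper: both build the backward-invariant set $B=\bigcup_{k\ge 0}P^{-k}(L)$, use measure preservation to get $\omega(B)=\omega(L)$, apply ergodicity of $\omega$ to conclude $\omega(L)\in\{0,1\}$, and then invoke $\Supp(\omega)=J(P)$ to rule out $\omega(L)=1$ when $L\subsetneq J(P)$. The only cosmetic difference is that you obtain $\omega(B)=\omega(L)$ via continuity from below on the increasing chain $P^{-k}(L)$, whereas the paper writes $B$ as $L$ together with preimages of the null set $P^{-1}(L)\setminus L$.
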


\begin{proof}
  For the proof, we use two facts both due to Brolin \cite[Theorem 17.1]{Brolin} (see also \cite[Cor. 6.5.7]{Ransford}).
  Firstly, $P$ preserves $\omega$, in the sense that
  $\omega(P^{-1}(U)) = \omega(U)$ for any Borel set $U$.
  Secondly, that $P$ is strongly mixing, and in particular,
  if for some Borel set $U$ we have $P^{-1}(U) = U$
  then either $\omega(U)=0$ or $\omega(U)=1$.

  Suppose $L\subset J(P)$ is a compact subset 
  with $P(L) = L$ and $L \not= J(P)$. 
  We must show that $\omega(L) = 0$. 
  The hypothesis $P(L) = L$ implies $L \subset P^{-1}(L)$. 
  Then
  \begin{equation}\label{eq:zeromeas}
    \omega(P^{-1}(L) \setminus L) = 0,
  \end{equation}
  since $P$ preserves $\omega$.
  Let 
  $$V = \bigcup_{k \geq 0} P^{-k}(L) = L \cup 
  \bigcup_{k \geq 0} P^{-k}\left(P^{-1}(L) \setminus L\right).
  $$
  By \eqref{eq:zeromeas} and preservation of measure, we have
  $\omega\left(
    \bigcup_{k \geq 0} P^{-k}\left(P^{-1}(L) \setminus L\right)\right) = 0$.
  So $\omega(V) = \omega(L)$.
  By construction $P^{-1}(V) = V$, hence ergodicity implies that
  $\omega(L) = \omega(V) \in \{0, 1\}$.
  The Julia set $J(P)$ is Dirichlet regular, 
  which implies $\Supp(\omega) = J(P)$, see e.g.~\cite[Cor. 6.5.5]{Ransford}.
  Thus, if $\omega(L)=1$, then $L= J(P)$, since $L$ is compact, 
  contradicting the assumption $L \neq J(P)$.
  Hence $\omega(L) = 0$ as required.
 \end{proof} 
 
\begin{cor}\label{corErgodic}
  Let $P$ be a polynomial of degree $n > 1$. 
  If $\partial U \neq J(P)$ 
  for each of the finitely many bounded periodic Fatou components.
  Then $\omega(\partial U) = 0$ for 
  every bounded Fatou component $U\subset K(P)$.
\end{cor}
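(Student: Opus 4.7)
The plan is to reduce the general case to that of bounded \emph{periodic} Fatou components and then apply Theorem~\ref{thmErgodic} to a suitable iterate $P^p$.

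First I would handle a general bounded Fatou component $U$. By Sullivan's no-wandering-domains theorem there exists $k\geq 0$ such that $V:=P^k(U)$ is a bounded periodic Fatou component. Since $J(P)$ is totally invariant and $V$ lies in the Fatou set, continuity of $P^k$ gives
\[
  P^k(\partial U)\subseteq \overline{P^k(U)}\cap J(P)=\overline V\cap J(P)=\partial V,
\]
so $\partial U\subseteq P^{-k}(\partial V)$. As $P$ preserves $\omega$ (Brolin), so does every iterate, and therefore
\[
  \omega(\partial U)\leq \omega\bigl(P^{-k}(\partial V)\bigr)=\omega(\partial V).
\]
Hence it suffices to prove $\omega(\partial V)=0$ for every bounded periodic Fatou component $V$.

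Next, fix such a $V$ of exact period $p\geq 1$, so that $P^p(V)=V$. I would apply Theorem~\ref{thmErgodic} to the polynomial $Q:=P^p$, of degree $n^p>1$. Since $J(Q)=J(P)$ and $K(Q)=K(P)$, the equilibrium measure of $K(Q)$ is the same $\omega$. The compact set $L:=\partial V\subset J(P)$ is, by the hypothesis of the corollary, strictly smaller than $J(P)$. The inclusion $Q(\partial V)\subseteq \partial V$ is the $k=p$ instance of the chain above applied with $U=V$. For the reverse inclusion, $Q\colon V\to V$ is a proper holomorphic self-map and hence surjective; a short compactness argument (every $w\in \overline V$ is the limit of $Q(z_n)$ for $z_n\in V$, and the $z_n$ lie in the compact set $\overline V$) extends surjectivity to $\overline V\to \overline V$, and the $Q$-preimage of a point of $\partial V$ cannot lie in $V$ because $Q(V)=V$ is in the Fatou set. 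Thus $Q(\partial V)=\partial V$, and Theorem~\ref{thmErgodic} applied to $Q$ yields $\omega(\partial V)=0$.

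The only step that requires real care is the equality $P^p(\partial V)=\partial V$; everything else is a clean assembly of Sullivan's theorem, the $P$-invariance of $\omega$, and Theorem~\ref{thmErgodic}. I do not anticipate a serious obstacle beyond this bookkeeping.
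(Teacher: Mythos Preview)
Your proof is correct and follows essentially the same approach as the paper: reduce to the periodic case via Sullivan's theorem together with $\partial U\subseteq P^{-k}(\partial V)$ and the $P$-invariance of $\omega$, and then apply Theorem~\ref{thmErgodic} to the iterate $P^p$ using $P^p(\partial V)=\partial V$. The only differences are cosmetic: you treat the non-periodic case first and supply more detail for the equality $P^p(\partial V)=\partial V$, which the paper deduces in one line from properness of $P^p\colon V\to V$.
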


It is currently unknown if there exists a non-linear polynomial that has more than one bounded Fatou component, and for which at least one of them, $U$, 
satisfies $\partial U = J(P)$.
See \cite{Okuyama2} for an exposition of this problem.

\begin{proof} 
Assume $\partial U \neq J(P)$ for each of the finitely many 
bounded periodic Fatou components. 
Let $U\subset\Omega(P)$ be a bounded Fatou component.
  
  Consider at first the case where $U$ is periodic with
  some minimal period $k \geq 1$.
  By replacing $P$ with the $k$'th iterate $P^k$,
  we can assume $k = 1$, that is, $U$ is fixed by $P$. 
  Then $P:U \to U$ is a proper map and hence 
  $L = \partial U\subset J(P)$ satisfy $P(L) = L$.
  Hence $\omega(L) = 0$ by Theorem~\ref{thmErgodic}.
  
  Next consider the other case where $U$ is not periodic.
  By Sullivan's no wandering domains theorem,
  there exist $k > 0$, such that $V = P^k(U)$
  is a periodic Fatou component.
  We have already shown $\omega(\partial{V}) = 0$
  and since $\partial{U} \subset P^{-k}(\partial{V})$,
  we get $\omega(\partial{U}) \leq \omega(\partial{V}) = 0$, finishing the proof.
\end{proof}

Here follows our first application.
\begin{thm}\label{thmOrthoPoly}
  Let $P(z) = z^n + O(z^{n-2})$ be a monic and centered
  non-linear polynomial such that $\partial U \neq J(P)$ 
  for each bounded periodic Fatou component $U$.
  Let $\omega$ be the equilibrium measure of $K(P)$,
  and $\mu$ a Borel probability measure on $K(P)$ with $\omega \ll \mu$.
  Also, let $q_k(z) = z^k + o(z^k)$ be the orthogonal
  polynomials associated to $\mu$.
  Then $(q_k^{(m)} - a)_k$ has the asymptotic zero
  distribution $\omega$, for arbitrary $a \in \C$.
\end{thm}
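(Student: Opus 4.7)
The strategy is to invoke Theorem \ref{thmCombined} with $K := K(P)$ and the constant perturbation sequence $a_k \equiv a$. This reduces the problem to verifying four hypotheses: (i) $\omega(\overline V) = 0$ for every component $V$ of $\mathrm{int}\, K(P)$; (ii) $\log^+\absv{a_k} = o(n_k)$; (iii) $(q_k)_k$ is $K(P)$-regular; (iv) $(q_k)_k$ centers on $K(P)$.

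Hypothesis (ii) is trivial since $a$ is a constant and $n_k = k \to \infty$. For (i), the components $V$ of $\mathrm{int}\, K(P)$ are exactly the bounded Fatou components of $P$. Since $\mathrm{supp}(\omega) = J(P)$, we have $\omega(V) = 0$, so $\omega(\overline V) = \omega(\partial V)$. The hypothesis on $P$ matches that of Corollary \ref{corErgodic}, which therefore yields $\omega(\partial V) = 0$.

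For (iv), Fej\'er's theorem places the zeros of the monic orthogonal polynomial $q_k$ in $\mathrm{conv}(\mathrm{supp}\,\mu) \subseteq \mathrm{conv}(K(P))$, a fixed compact set, which secures the first centering condition. The second, quantitative condition—an $O(1)$ bound on the number of zeros in closed sets disjoint from $K(P)$—is obtained by appealing to \cite[Example 6 and 7]{HPU1}, where exactly this type of uniform bound is established for orthogonal polynomials on polynomially convex compacta. For (iii), the assumption $\omega \ll \mu$ implies via Radon–Nikodym that $d\mu/d\omega > 0$ $\omega$-almost everywhere, a classical sufficient condition for $\mu$ to be regular in the Stahl–Totik sense \cite{StahlTotik}. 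Together with $\mathrm{cap}(K(P)) = 1$, regularity gives $\tfrac{1}{k}\log\absv{q_k(z)} \to g_\Omega(z)$ locally uniformly outside $\mathrm{conv}(K(P))$; combined with the uniform boundedness of zeros from (iv), the remark following the definition of $K$-regularity upgrades this to genuine $K(P)$-regularity.

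With (i)--(iv) in place, Theorem \ref{thmCombined} immediately delivers the conclusion for every $m \geq 0$ and every $a \in \C$. The main obstacle is (iv): the regularity established in (iii) would a priori give only the weaker $o(n_k)$ bound on exterior zeros (enough for Theorems \ref{thmBweakstarconv} and \ref{thmNoinside} but not for inheritance through derivatives), so promoting this to the $O(1)$ bound required by strong centering is the step that depends essentially on the specialized orthogonal polynomial results in \cite{HPU1} rather than on the general machinery developed in the present paper.
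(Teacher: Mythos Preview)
Your proposal is correct and follows essentially the same route as the paper: verify the hypotheses of Theorem~\ref{thmCombined} for $K=K(P)$ and the constant sequence $a_k\equiv a$, then conclude. The only difference is in the external references invoked for centering and $K$-regularity: the paper appeals directly to Widom (\cite[Lemma~4]{Widom} for centering, \cite[Corollary p.~1007]{Widom} for $K(P)$-regularity, using $\mathrm{cap}(K(P))=1$), whereas you route centering through Fej\'er plus \cite[Examples~6--7]{HPU1} and regularity through the Erd\H{o}s--Tur\'an type criterion in Stahl--Totik; both paths deliver the same two inputs, and your use of Corollary~\ref{corErgodic} for condition~(i) is in fact the more precise citation.
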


\begin{rem}
It follows from \cite{BGH} that if we take $\mu = \omega$, then
$P^{j} = q_{n^j}$, so the iterates sits naturally within a sequence of
orthogonal polynomials, and we recover Okuyama's and Vigny's result
(\cite[Theorem 1]{OkuyamaVigny}) except in the case where $P$
has a bounded Fatou component $U$, with $\partial U = J(P)$.
\end{rem}
\begin{rem}
With the same proof the Theorem generalizes 
to admissible measures $\mu$ in the sense of Widom, 
\cite[Section 4.~p. 1001]{Widom} 
and families $(q_k)_k$ of extremal polynomials in the sense of Widom, 
\cite[Section 3.~p. 999--1001]{Widom}, which includes the extremal families 
in $L^p(\mu)$ for any $p>0$.
\end{rem}
\begin{proof}
  Widom has shown that $(q_k)_k$ centers on $K$, 
  (see \cite[Lemma 4]{Widom}) and, 
  since the logarithmic capacity of $K(P)$ is $1$, 
  that $(q_k)_k$ is $K(P)$-regular 
  (see \cite[Corollary p.~1007]{Widom}).
  Finally, $\omega(\overline{V}) = 0$ for 
  every component $V$ of the interior of $K(P)$, 
  by Theorem \ref{thmErgodic}.
  Hence the result immediately follows from Theorem~\ref{thmCombined}.
\end{proof}

Our next application concerns the parameter space
of quadratic mappings.
Denote by $P_c$ the  quadratic polynomial $P_c(z) = z^2 + c$.
The Mandelbrot set $M$ is defined to be the set of $c \in \C$
such that the forward orbit $0, P_c(0), P_c^2(0), \ldots$
is bounded.

It has long been known, that the asymptotic zero distribution of
$(P_c^k(0))_k$ equals the equilibrium distribution $\omega_M$.
In the recent paper (see \cite[Theorem 3]{Okuyama1})
Okuyama showed that the same holds true, for the sequence of derivatives
$P_c'(0), (P_c^2)'(c), \ldots$

However, it follows from Theorem \ref{thmCombined}, that the same holds
true for the sequence of $m$th derivatives
$P_c^{(m)}(0), (P_c^2)^{(m)}(c), \ldots$

\begin{thm}
  Let $q_k = P_c^{k}(0)$, for $k = 1, 2, \ldots$,
  and let $a_k \in \C$ be such that $\log^+ \absv{a_k} = o(2^{k-1})$.
  Then the sequence 
  $(q_k^{(m)} - a_k)_m$ has asymptotic root distribution
  $\omega_M$, for all $m = 0, 1, \ldots$
\end{thm}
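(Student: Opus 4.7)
The plan is to apply Theorem~\ref{thmCombined} with $K = M$. First I would verify the standing assumptions: the Mandelbrot set $M$ is compact, its complement $\C \setminus M$ is connected so $M$ is polynomially convex, and Douady--Hubbard showed $\mathrm{cap}(M) = 1$ so $M$ is non-polar. From the recursion $q_{k+1} = q_k^2 + c$ with $q_1(c) = c$, the polynomial $q_k(c) = P_c^k(0)$ is monic of degree $n_k = 2^{k-1}$, so the hypothesis $\log^+ \absv{a_k} = o(2^{k-1})$ is exactly the perturbation condition $\log^+ \absv{a_k} = o(n_k)$ required by Theorem~\ref{thmCombined}.

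Centering on $M$ is immediate: if $q_k(c_0) = 0$ then $P_{c_0}^k(0) = 0$, so the critical orbit of $P_{c_0}$ is periodic and $c_0 \in M$. Hence every zero of $q_k$ lies in $M$, which makes both clauses of the centering definition trivial (condition~2 because any closed set disjoint from $M$ contains no zeros at all). For $M$-regularity, I would invoke the Douady--Hubbard identity
\begin{equation*}
g_M(c) = \lim_{k \to \infty} \frac{1}{2^{k-1}} \log^+ \absv{P_c^k(0)},
\end{equation*}
which converges locally uniformly on $\C \setminus M$. Since all zeros of $q_k$ lie in $M$, the function $h_k := \tfrac{1}{n_k}\log\absv{q_k} - g_M$ is harmonic on $\C \setminus M$ and extends harmonically through $\infty$; hence the maximum-principle argument from the proof of Theorem~\ref{thmInheritKreg} upgrades local uniform convergence on $\partial \D(0, R)$ (for any $R$ with $M \subset \D(0, R)$) to uniform convergence on all of $\C \setminus \D(0, R)$.

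The step I expect to be the main obstacle is the third hypothesis, that $\omega_M(\overline{V}) = 0$ for every component $V$ of the interior of $M$. Since $\omega_M$ is supported on $\partial M$ and $\partial V \subset \partial M$ whenever $V$ is a component of $\operatorname{int}(M)$, the task reduces to showing $\omega_M(\partial V) = 0$. The filled-Julia-set results (Theorem~\ref{thmErgodic} and Corollary~\ref{corErgodic}) do not apply directly, since $M$ is a parameter-space object rather than a forward-invariant set for a fixed polynomial dynamical system. In their place I would appeal to the external-ray description of $\omega_M$: via the Riemann map $\Phi: \C \setminus \overline{\D} \to \C \setminus M$, the measure $\omega_M$ is the push-forward of normalized Lebesgue measure on $\partial \D$ under the prime-end extension of $\Phi^{-1}$, and a self-similarity/ergodicity argument for the angle-doubling map on $\partial \D$ should force the set of external angles whose rays accumulate on the boundary of a hyperbolic component to have Lebesgue measure zero.

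Once these three hypotheses are in place, Theorem~\ref{thmCombined} delivers the desired conclusion: for every $m = 0, 1, \ldots$ the sequence $(q_k^{(m)} - a_k)_k$ is $M$-regular, centers on $M$, and has root distributions converging weak-star to $\omega_M$.
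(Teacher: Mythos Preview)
Your overall strategy---verify the three hypotheses of Theorem~\ref{thmCombined} for $K = M$ and then invoke that theorem---is exactly what the paper does, and your treatments of centering and $M$-regularity match the paper's (the paper simply cites \cite{DH82} for regularity; your maximum-principle upgrade is a correct way to make that citation precise).

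The one place where your write-up diverges is the hypothesis $\omega_M(\overline V)=0$ for every interior component $V$. The paper does not attempt an argument here; it cites \cite[proof of Lemma~2]{BuffGauthier} and \cite{Zakeri} for this fact. Your proposed route via external rays and ergodicity of angle-doubling is in the right spirit (indeed Zakeri's work is exactly of this flavour), but as written it has a gap: you speak only of the boundary of a \emph{hyperbolic} component, whereas Theorem~\ref{thmCombined} requires the conclusion for \emph{every} component of $\operatorname{int}(M)$. It is an open problem whether non-hyperbolic (``queer'') components exist, so one cannot simply assume every $V$ is hyperbolic; the cited results of Buff--Gauthier and Zakeri handle the general case. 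If you want a self-contained argument you would need to show that the set of angles whose rays accumulate on $\partial V$ has Lebesgue measure zero for an arbitrary interior component, not just a hyperbolic one.
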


\begin{proof}
  First, we have that  $\omega(\partial W) = 0$ for every connected
  component $W$ of the interior of $M$, see
  \cite[proof of Lemma 2]{BuffGauthier} and \cite{Zakeri}.

  Secondly, $(P_c^k(0))_k)$ centers on $M$.
  Indeed, if $c_0$ is a zero of $P_c^k(0)$, then $0$ is a periodic
  point of $P_{c_0}$, and it follows that $c_0 \in M$.
  Hence, every zero of $P_c^k(0)$ for all $k = 1, 2, \ldots$, is an
  element of $M$.

  Thirdly, $(P_c^k(0))_k$ is $M$-regular, (see \cite{DH82}).

  We can therefore apply Theorem \ref{thmCombined} and arrive
  at the desired conclusion.
\end{proof}

We have heuristically located the zeros of $q_k^{(m)}$ for $m=1$ for various
values of $k$ in Figure \ref{fig:diffcent}.
Even if we surely haven't located every zero of $q_k$, for
$k > 10$, we still see the zeros accumulate on $\partial M$.

\begin{figure}
  \begin{center}
    \begin{tabular}{cc}
      \includegraphics[width=4.8cm]{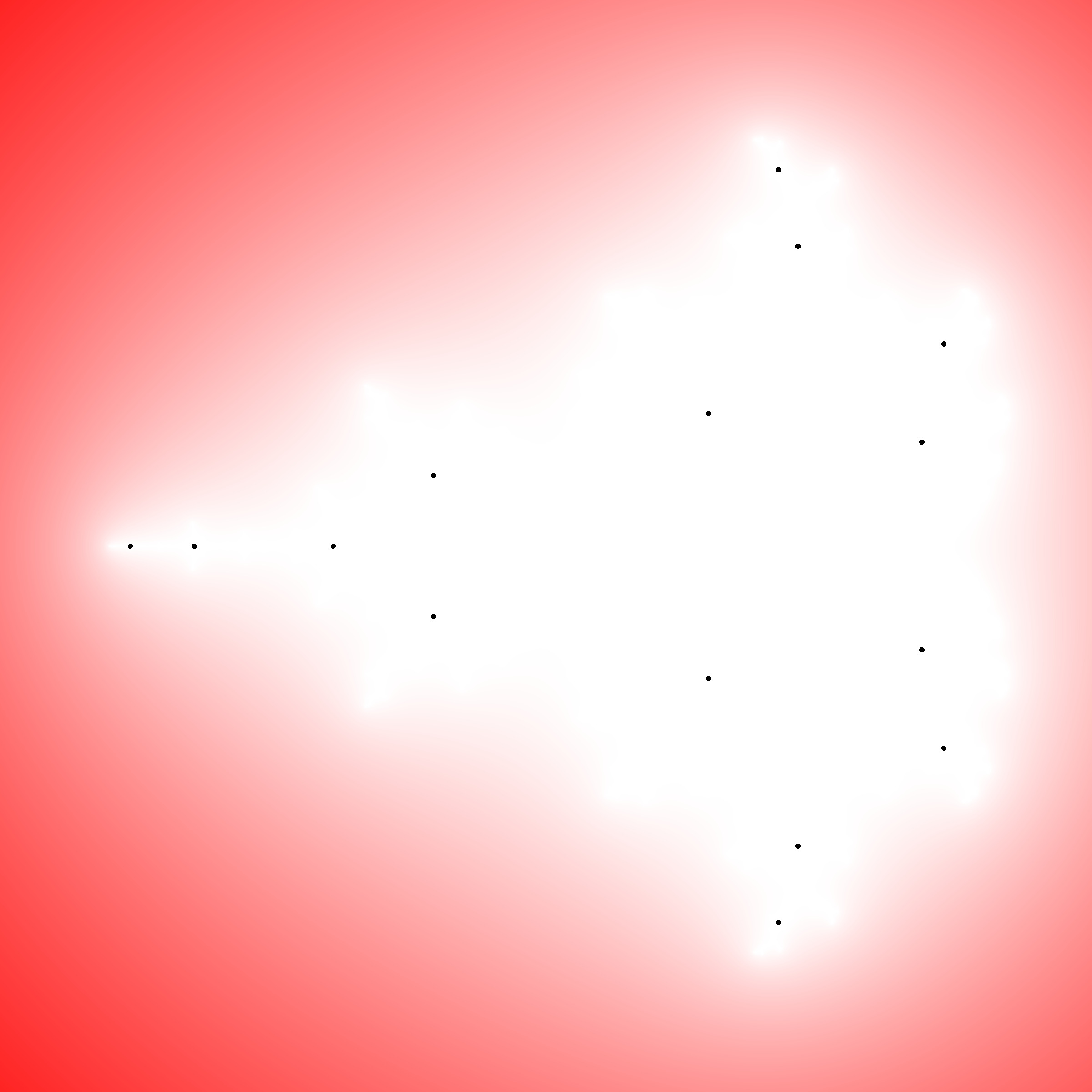} &
    \includegraphics[width=4.8cm]{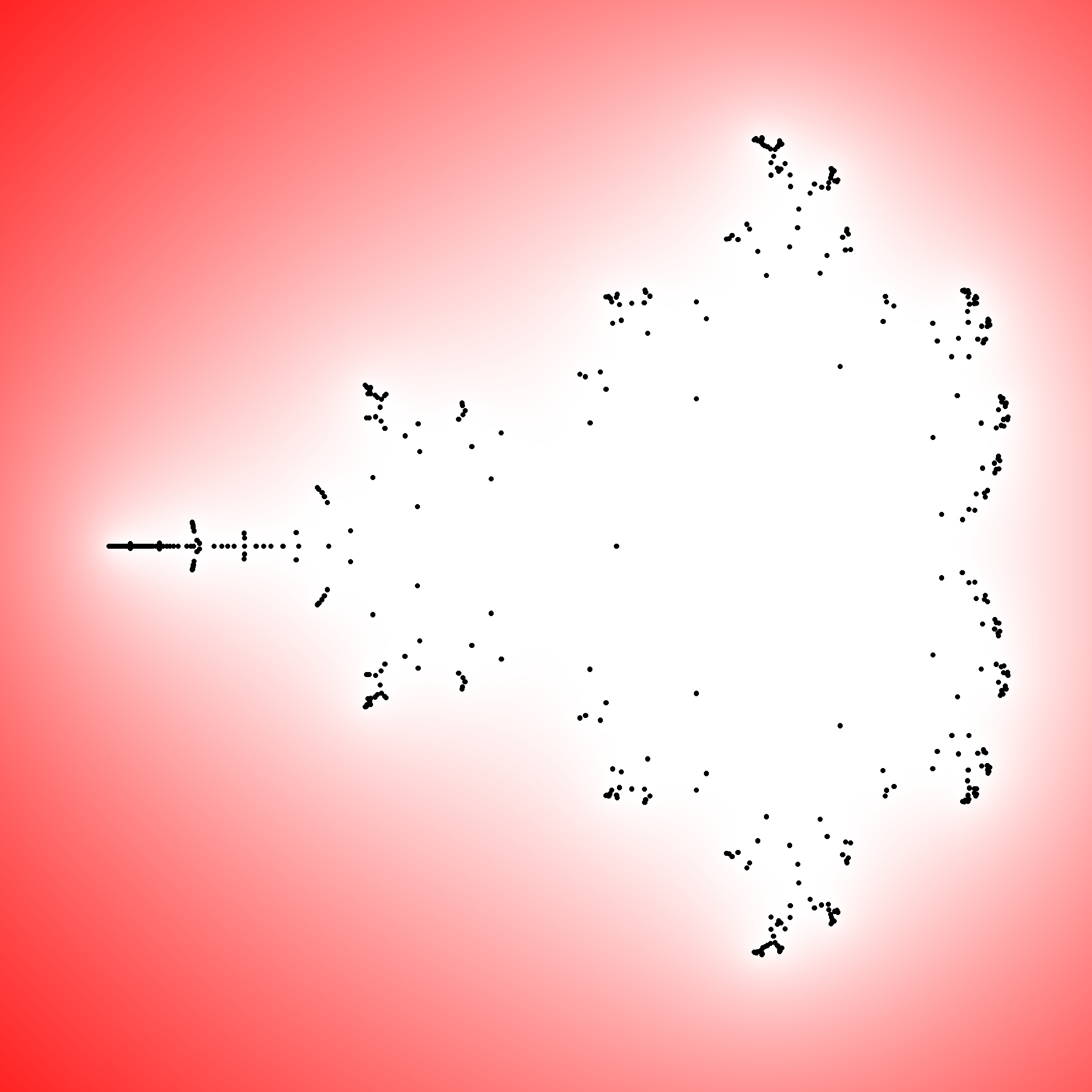}  \\
      \includegraphics[width=4.8cm]{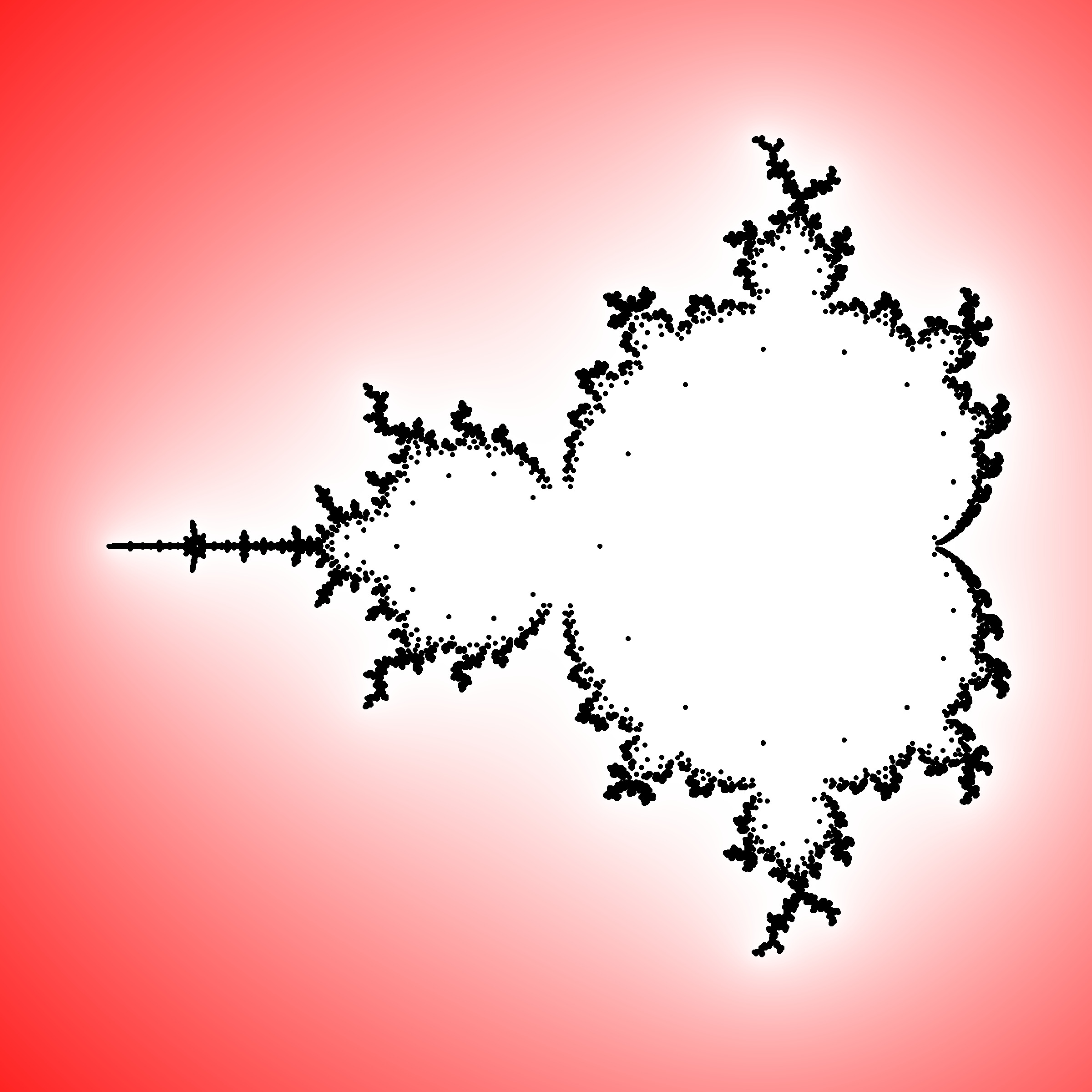} &
    \includegraphics[width=4.8cm]{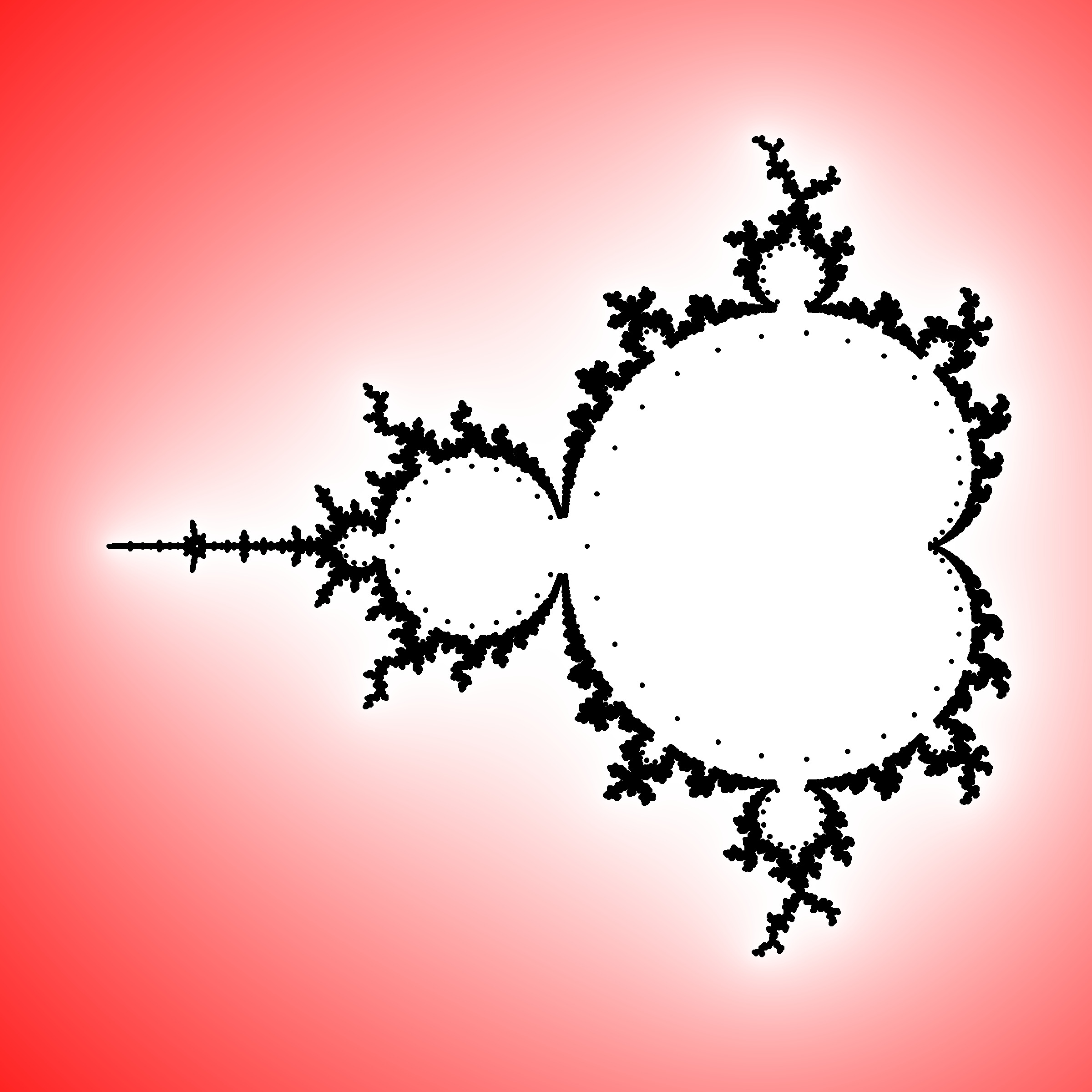}  \\
      \includegraphics[width=4.8cm]{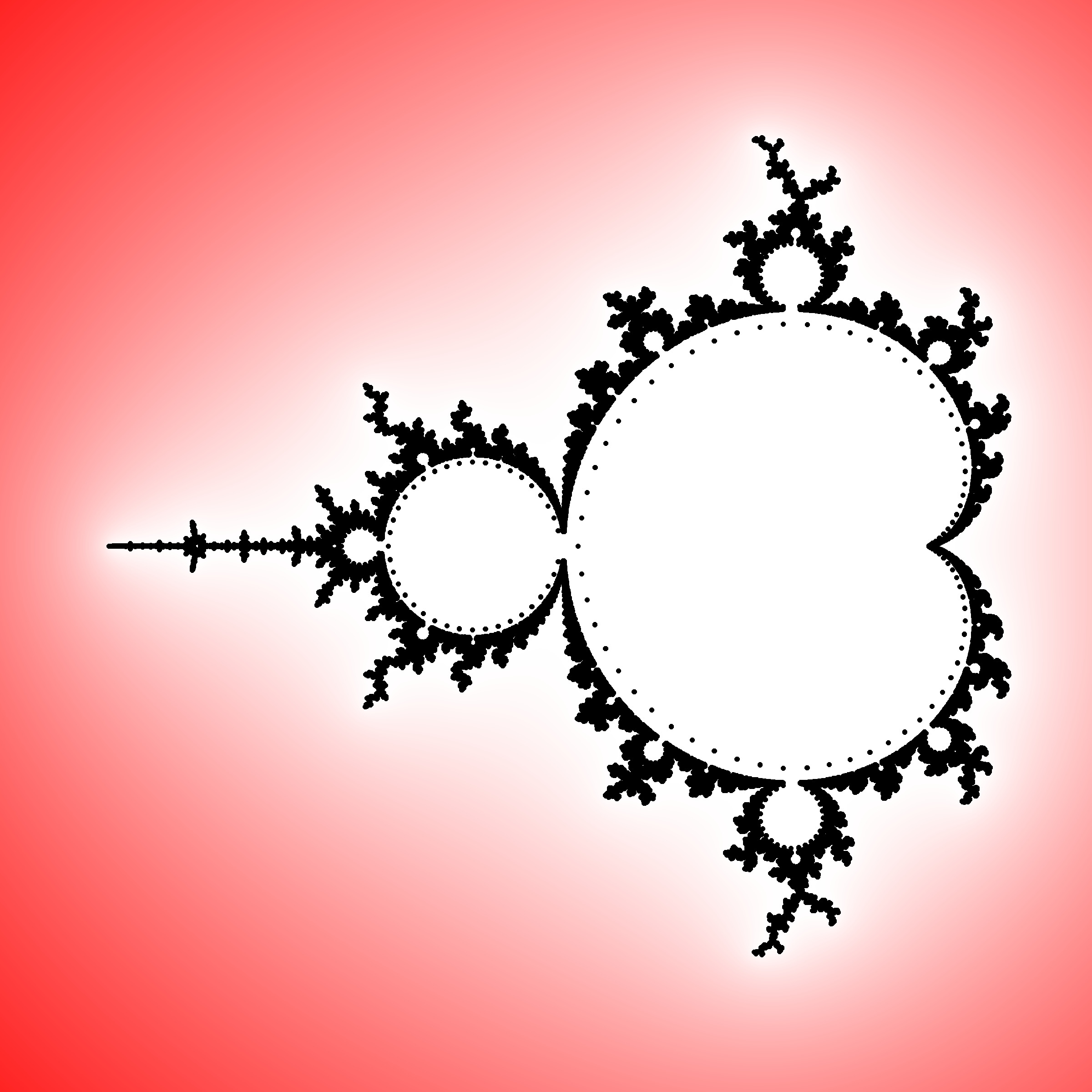} &
    \includegraphics[width=4.8cm]{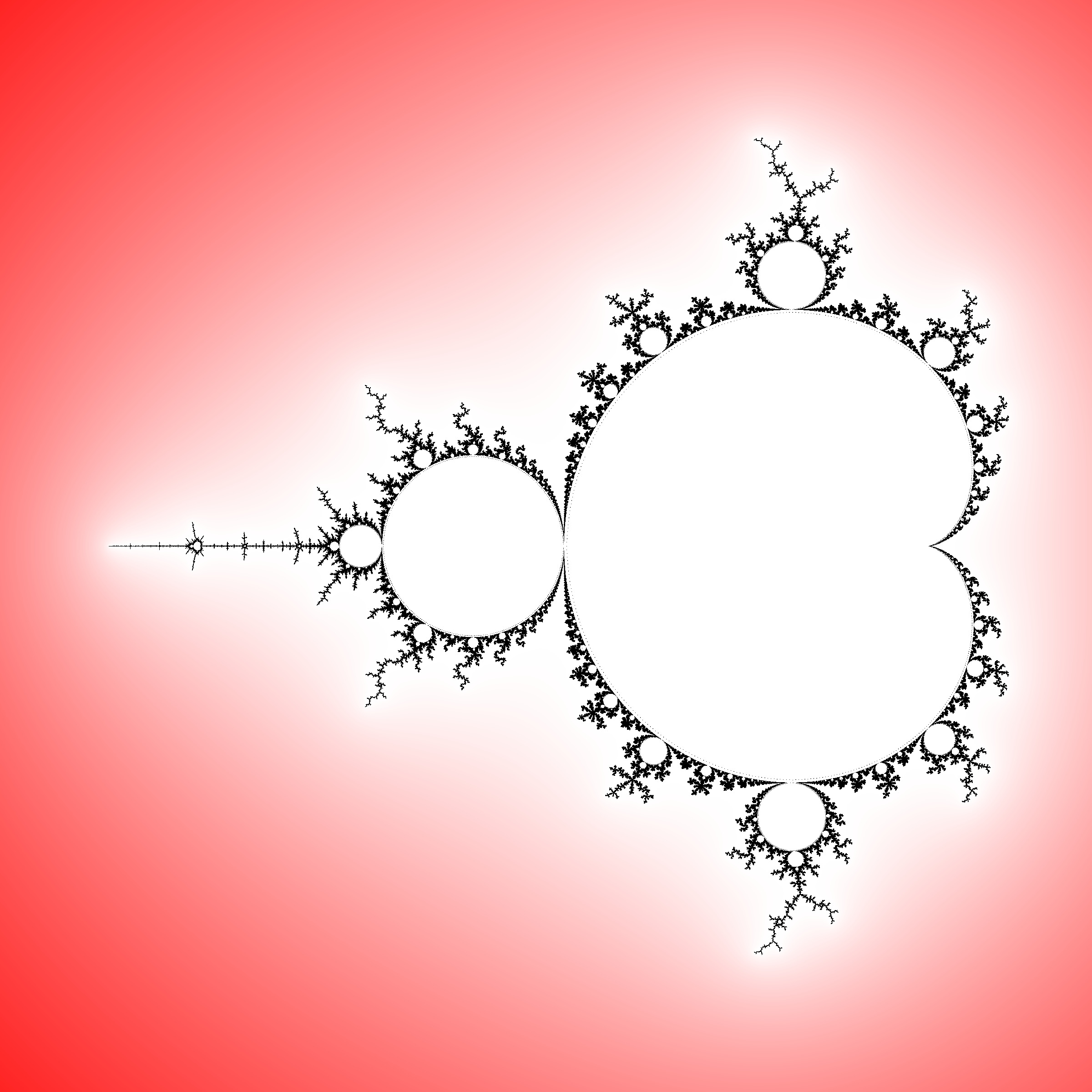}  \\
    \end{tabular}
  \end{center}
  \caption{ %
    Letting $q_k(c) = P_c^k(0)$, with $P_c(z) = z^2 + c$,
    we have illustrated the zeros of $q_k'$ for $k = 5, 10, 40, 80, 640$.
    The zeros are indicated by black dots
    (we have chosen smaller dots in the last image,
    so the emerging shape of the Mandelbrot set is better seen),
    while the potential associated to the equilibrium distribution
    of the Mandelbrot set is illustrated with shades of red.
    The asymptotic zero-distribution is the equilibrium measure
    on the Mandelbrot set, by a result of Okuyama.
    We show that this is also true for the derivatives of order $m$,
    i.e. $\mu_k^m \wto \omega_M$.
  } \label{fig:diffcent}
\end{figure}

\paragraph{Acknowledgments}
The authors would like to thank Okuyama for helpful discussions.
We also thank the
Independent Research Fund Denmark | Natural Sciences for
support via the grant DFF--1026--00267B.
%
%
%
%
\bibliographystyle{plain}
\bibliography{limitmeasure.bib}
\end{document}